\newtheorem{theorem}{Theorem}[section]
\newtheorem{lemma}[theorem]{Lemma}
\newtheorem{proposition}[theorem]{Proposition}
\newtheorem{corollary}[theorem]{Corollary}
\newtheorem{statement}[theorem]{Statement}
\theoremstyle{definition}
\newtheorem{definition}[theorem]{Definition}
\newtheorem{problem}[theorem]{Problem}
\newtheorem*{problem*}{Problem}
\theoremstyle{remark}
\newtheorem*{example}{Example}
\begin{document}

\title[Discrete Subsets in Topological Groups]{Discrete Subsets in Topological Groups\\
and Countable Extremally Disconnected Groups}

%    author one information
% \author[short version for running head]{name for top of paper}
\author{Evgenii Reznichenko}
\address{Department of General Topology and Geometry, Mechanics and  Mathematics Faculty, 
M.~V.~Lomonosov Moscow State University, Leninskie Gory 1, Moscow, 199991 Russia}
\email{erezn@inbox.ru}
\thanks{This work was supported by the Russian Foundation for Basic Research (project 
no.~15-01-05369).}

%    author two information
\author{Ol'ga Sipacheva}
\address{Department of General Topology and Geometry, Mechanics and  Mathematics Faculty, 
M.~V.~Lomonosov Moscow State University, Leninskie Gory 1, Moscow, 199991 Russia}
\email{ovsipa@gmail.com}

%    \subjclass is required.
\subjclass[2010]{Primary 54G05, 54H11, 03E35, 22A05}

%\date{December 24, 2016 and, in revised form, October 16, 2017.}

%\commby{Mirna Dzamonja}

%    Abstract is required.
\begin{abstract}
In 1967 Arhangel'skii posed the problem of the existence in ZFC of a nondiscrete extremally 
disconnected topological group. The general case is still open, but we solve Arhangel'skii's 
problem for the class of countable groups. Namely, we prove that 
the existence of a countable nondiscrete extremally disconnected group implies the existence 
of a rapid ultrafilter; hence, such a group cannot be constructed in ZFC. 
We also prove that any countable topological group 
in which the filter of neighborhoods of the identity element 
is not rapid contains a discrete set with precisely one limit point, which gives a negative 
answer to Protasov's question on the existence in ZFC of 
a countable nondiscrete group in which all discrete subsets are closed.  
\end{abstract}

\maketitle

%    Text of article.

\section*{Introduction and preliminaries}

This work was motivated by the desire to solve the following problem of 
Arhan\-gel'skii~\cite{Arhangelskii67}.

\begin{problem*}[Arhangel'skii, 1967]
Does there exist in ZFC a nondiscrete Hausdorff
extremally disconnected topological group? 
\end{problem*}

The general case is still open, but in this paper we solve Arhangel'skii's problem 
for the class of countable groups. Namely, we prove 
that \emph{the nonexistence of a countable nondiscrete Hausdorff extremally disconnected group is 
consistent with ZFC} (see Corollary~\ref{Corollary 4.3}). Since extremal disconnectedness is, 
obviously, inherited by dense subspaces, it follows that separable nondiscrete extremally 
disconnected groups cannot exist in ZFC either.

Recall that a topological space is said to be \emph{extremally disconnected} if the closure of any 
open set in this space is open (or, equivalently, the closures of any two disjoint open sets are 
disjoint). Extremal disconnectedness is a 
classical notion of topology and functional analysis, and it plays a fundamental role in Boolean 
algebra. Extremally disconnected spaces were introduced by Stone~\cite{Stone} in order to 
characterize complete Boolean algebras (a Boolean algebra is complete if and only if its Stone 
space is extremally disconnected). Gleason proved that, in the category of compact spaces, the 
extremally disconnected spaces are precisely the projective objects~\cite{Gleason}, and Strauss 
extended his result to the category of regular Hausdorff spaces and perfect maps~\cite{Strauss}. 
Moreover, each regular space $X$ is the image of a uniquely determined extremally disconnected 
space $A(X)$ under an irreducible perfect map $\pi_X$ (the pair $(A(X), \pi_X)$ is called the 
projective resolution, or absolute, of $X$); see~\cite{Woods} for details. Finally, we mention the 
classical Nachbin--Goodner--Kelley theorem, which says that the injective objects in the category 
of Banach spaces and linear contractions are the spaces of continuous functions on extremally 
disconnected compact spaces~\cite{Kelley}. 

It has long been known that an infinite  extremally disconnected topological group cannot be 
compact; moreover, it cannot contain infinite compact sets~\cite{Arhangelskii67}. However, 
Arhangel'skii's problem on the existence in ZFC of general (noncompact) extremally disconnected 
groups has not been solved so far. Still, some progress has been made. First, several consistent 
examples have been constructed~\cite{Sirota, Louveau, Malykhin75, Malykhin79, Zelenyuk96, 
Zelenyuk00}. Most of these examples are countable, although Malykhin constructed (under various 
set-theoretic assumptions) a locally uncountable separable extremally disconnected group and a 
nondiscrete extremally disconnected group in which all countable subsets are closed and 
discrete~\cite{Malykhin79}. Note that maximal topological groups (see definition in 
Section~\ref{Section 4}), which are an important special case of extremally disconnected groups, 
are always locally countable~\cite{Malykhin75, Malykhin79}. The countable version of 
Arhangel'skii's problem was posed by various authors (see, e.g., 
\cite[Problem~6]{Ponomarev-Shapiro} and \cite[Question~6.1]{Comfort-vanMill}): Does there exist a 
ZFC example of a countable nondiscrete extremally disconnected topological group? It has been 
proved that such an example cannot have maximal topology~\cite{ProtasovMS} (see also 
\cite[Corollary~5.21]{Zelenyuk11}), and it cannot contain a countable nonclosed discrete 
set~\cite{Zelenyuk06} or a sequence of countable open subgroups whose intersection has empty 
interior~\cite{Sipacheva}. 

In this paper we solve the countable version of 
Arhangel'skii's problem by proving that the existence of a countable nondiscrete extremally 
disconnected topological group implies that of a rapid filter (recall that the nonexistence 
of rapid filters is consistent with ZFC~\cite{Miller}).
Our solution is based on the following statement, which we regard as one of the two 
main results of this paper: \emph{Any countable nondiscrete Hausdorff topological group whose 
identity element has nonrapid filter of neighborhoods contains a discrete subspace with precisely 
one limit point} (Theorem~\ref{Corollary 2.4}). Thus, nondiscrete Hausdorff 
countable topological groups in which all discrete subspaces are closed cannot exist in~ZFC.  

Thanks to Malykhin's beautiful theorem that any extremally 
disconnected topological group must contain an open Boolean subgroup (i.e., a subgroup 
consisting of elements of order~2) \cite{Malykhin75}, in studying the existence 
of extremally  disconnected groups, it suffices to consider only Boolean groups.  
Our second main result is that \emph{if there are no rapid 
filters, then any countable nondiscrete Hausdorff Boolean topological group 
contains two disjoint discrete subsets for each of which the zero of the group 
is a unique limit point} (Theorem~\ref{Theorem 3.1}). 

The paper is organized as follows. In the first section we introduce and study vast sets in 
groups, which are our main technical tool. In the second section we use them to construct nonclosed 
discrete sets in countable topological groups. The third section is devoted to nonclosed discrete 
sets in countable Boolean topological groups. In the last section we collect corollaries of the 
technical results of the first three sections, answer some known questions, and ask new 
questions.

A key role in our study is played by rapid filters on $\omega$. They were introduced 
in~\cite{Mokobodzki} as filters whose elements form dominating families in ${}^\omega\omega$: 
a filter $\mathscr F$ on $\omega$ is said to be \emph{rapid} if every function 
$\omega\to\omega$ is majorized by the increasing enumeration of some element of $\mathscr F$. 
Clearly, any filter containing a rapid filter is rapid as well; thus, the existence of rapid 
filters is equivalent to that of rapid ultrafilters. Rapid ultrafilters are also known as 
semi-$Q$-point, or weak $Q$-point ultrafilters. In~\cite{Miller} Miller proved that 
the nonexistence of rapid (ultra)filters is consistent with ZFC and gave equivalent 
characterizations of rapid (ultra)filters; one of them, which is 
particularly convenient for our purposes, can be reformulated as follows: 
\emph{A filter $\mathscr F$ on $\omega$ 
is nonrapid if and only if, given any function $f\colon \omega\to\omega$, 
there exists a sequence $(T_n)_{n\in\omega}$ 
of finite subsets of $\omega$ such that each $F\in\mathscr F$ satisfies the condition 
$|F\cap T_n|\geq f(n)$ 
for some $n\in\omega$} (see \cite[Theorem~3\,(3)]{Miller}).

We also mention $Q$-point, $P$-point, and selective ultrafilters on $\omega$. An ultrafilter 
$\mathscr U$ on $\omega$ is a \emph{$P$-point}, or \emph{weakly selective}, ultrafilter if, given 
any partition $\{A_n: n\in \omega\}$ of $\omega$ (or, equivalently, any increasing sequence  
$(A_n)_{n\in \omega}$ of subsets of $\omega$) with $A_n\notin \mathscr U$, $n\in \omega$, there 
exists an $A\in \mathcal {U}$ such that $|A\cap A_n|<\aleph_0$ for all $n$. An ultrafilter 
$\mathscr U$ on $\omega$ is said to be \emph{$Q$-point}, or \emph{rare}, if, given any partition 
$\{A_n: n\in \omega\}$ of $\omega$ into finite sets, there exists an $A\in \mathcal {U}$ such that 
$|A\cap A_n|=1$ for all $n$.  An ultrafilter which is simultaneously $P$-point and $Q$-point is 
said to be \emph{selective}, or \emph{Ramsey}. Any $Q$-point ultrafilter is rapid, but not vice 
versa (see, e.g.,~\cite{Miller}). As mentioned above, the nonexistence of rapid (and, therefore, 
$Q$-point) ultrafilters is consistent with ZFC. The nonexistence of $P$-point ultrafilters is 
consistent as well (see \cite{Shelah}; Shelah's original proof is presented 
in~\cite{Wimmers(Shelah)}). However, it is still unknown whether the nonexistence of both rapid and 
$P$-point ultrafilters is consistent with ZFC. 

Given a set $X$, we use $\operatorname{Ult}(X)$ to denote the set of ultrafilters on $X$ and 
$\operatorname{Ult}^*(X)$, the set of free ultrafilters on $X$. For a topological space $X$ and a 
point $x\in X$, by $\operatorname{Ult}_x(X)$ we denote the set of ultrafilters on $X$ converging to 
$x$ (i.e., containing all neighborhoods of $x$) and by $\operatorname{Ult}^*_x(X)$, the set of free 
ultrafilters on $X$ converging to $x$. There is a natural topology on $\operatorname{Ult}(X)$, 
which turns this set into a compact extremally disconnected space, called the \emph{ultrafilter 
space} of $X$ (see, e.g., \cite{Bourbaki}); the set $\operatorname{Ult}^*(X)$, as well as 
$\operatorname{Ult}_x(X)$ and $\operatorname{Ult}^*_x(X)$ for any $x\in X$, is closed in 
$\operatorname{Ult}(X)$. If sets $X$ and $Y$ differ by finitely many elements, then 
$\operatorname{Ult}^*(X)$ coincides with $\operatorname{Ult}^*(Y)$. Each map $f\colon X\to Y$ 
induces the map $\operatorname{Ult}(f)\colon \operatorname{Ult}(X)\to \operatorname{Ult}(Y)$ 
defined by setting $\operatorname{Ult}(f)(\mathscr U) = \mathscr V$ if $f^{-1}(M)\in \mathscr U$ 
for each $M\in \mathscr V$.

Given $a<b<\omega$, we set $[a,b]=\{n\in\omega: a\leq n\leq b\}$.

For simplicity, we assume all groups considered in this paper to be infinite and all topological 
groups, infinite and Hausdorff.

\section{Vast sets}

In this section we introduce vast sets in groups and describe their properties most important for 
our purposes. 

Given a  group $G$ with identity element $e$ and a positive integer $m$, let $\Phi_m(G)$ 
denote the family of all sets $M\subset G$ satisfying the following condition: 
\begin{itemize}
\item[$(\Phi_m)$]
for any $P\in [G]^m$, there exists a $Q\in [P]^2$ such that $Q^{-1}Q\subset M$;
\end{itemize}
this condition implies, in particular, that $e\in M$. 
We set $\Phi(G)=\bigcup_m \Phi_m(G)$.

\begin{definition}
\label{Definition 1} 
Let $G$ be a group with identity element $e$. 
We say that a set $M\subset G$ is \emph{vast}
if $M\in \Phi(G)$.
Given a vast set $M$, we denote the minimum $m$ for which $M\in \Phi_m(G)$ by $J^G_M$
or simply $J_M$, 
when it is clear from the context which group $G$ is meant. 
\end{definition}

First, we note that the intersections of vast sets with $P^{-1}P$ for large 
$P$ are large. 

\begin{proposition} 
\label{Proposition 1.2} 
Suppose that $G$ is a  group, $M\in \Phi(G)$, and $n$ is a positive integer.
Then there exists a positive integer $m$ such that, 
for any $P\in [G]^m$, there is a $Q\in [P]^n$ for which 
$Q^{-1}Q\subset M$.
\end{proposition}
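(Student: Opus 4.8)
The plan is to prove this by induction on $n$, using a Ramsey-type pigeonhole argument. The case $n=2$ is exactly the hypothesis $M\in\Phi(G)$: there is an $m_2$ (namely $J_M$) with $M\in\Phi_{m_2}(G)$. For the inductive step, suppose we have found $m=m_{n}$ so that every $P\in[G]^{m_n}$ contains a $Q\in[P]^n$ with $Q^{-1}Q\subset M$; I want to produce $m_{n+1}$ that works for $n+1$. The natural idea is to take $P$ very large, repeatedly extract $n$-element subsets $Q_1,Q_2,\dots$ with $Q_i^{-1}Q_i\subset M$ from disjoint chunks of $P$, and then hunt for two of these ``good'' blocks, or two points from across them, whose union is again good. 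But $Q^{-1}Q\subset M$ for the union $Q=Q_i\cup Q_j$ requires not only $Q_i^{-1}Q_i\subset M$ and $Q_j^{-1}Q_j\subset M$ but also $Q_i^{-1}Q_j\subset M$ and $Q_j^{-1}Q_i\subset M$, i.e.\ a cross condition; controlling this is where the real work is.

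The clean way to get the cross condition is to colour pairs. Fix a large $P$ and a partition of $P$ into $N$ blocks $B_1,\dots,B_N$, each of size $m_n$, so each $B_k$ contains a good set $R_k\in[B_k]^n$ with $R_k^{-1}R_k\subset M$. Now define a colouring of the pairs $\{k,l\}$ from $\{1,\dots,N\}$: colour $\{k,l\}$ ``good'' if $R_k^{-1}R_l\subset M$ and $R_l^{-1}R_k\subset M$, and ``bad'' otherwise. If $N$ is large enough (by Ramsey's theorem for pairs) we get either a large homogeneous set in colour ``good'' — in which case any two of the corresponding $R_k$'s union to a set $Q\in[P]^{2n}$, hence contains a subset of size $n+1$ doing the job, and we are done — or a large homogeneous set in colour ``bad.'' The induction will go through provided the ``bad'' alternative can be ruled out or exploited: I would try to show that an arbitrarily long ``bad''-homogeneous family is impossible, by applying the $\Phi_{m_n}$ (or $\Phi_{J_M}$) property again, this time to a transversal picking one point from each $R_k$ in the bad-homogeneous family — if that family has $\geq J_M$ members, two of the chosen points $x_k\in R_k$, $x_l\in R_l$ satisfy $\{x_k,x_l\}^{-1}\{x_k,x_l\}\subset M$, forcing $x_k^{-1}x_l\in M$ and $x_l^{-1}x_k\in M$; with a little more care (choosing the transversal adaptively, or iterating over all $n^2$ possible pairs of ``slots'') this should contradict badness, or at least let us shrink to the good case.

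I expect the main obstacle to be exactly this cross-block bookkeeping: a single bad pair $\{k,l\}$ only tells us \emph{some} product $R_k^{-1}R_l$ or $R_l^{-1}R_k$ escapes $M$, not which entries, so extracting a genuine contradiction from a large bad-homogeneous family requires iterating the argument over the finitely many ``coordinate patterns'' (pairs of positions within the $n$-element good sets) and bounding how the required size of $P$ blows up at each stage — the final $m=m_{n+1}$ will be a tower-type function of $m_n$, $J_M$, and $n$, but since we only need \emph{existence} of $m$, an explicit bound is unnecessary. A secondary point to watch is that in a nonabelian $G$ the condition $Q^{-1}Q\subset M$ is not symmetric in the obvious way, so I would be careful to keep both orientations $R_k^{-1}R_l$ and $R_l^{-1}R_k$ in all the colourings rather than quietly assuming commutativity.
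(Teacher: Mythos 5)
Your overall strategy (Ramsey plus a good/bad dichotomy) is the right instinct, but the proposal has a genuine gap exactly where you flag ``the real work'': you never actually rule out, or exploit, a large bad-homogeneous family of blocks, and the transversal argument you sketch cannot do it as stated. Badness of a pair $\{k,l\}$ only asserts that \emph{some} entry of $R_k^{-1}R_l\cup R_l^{-1}R_k$ escapes $M$, and the witnessing positions may vary from pair to pair; a transversal $x_k\in R_k$ probes one position per block, so the conclusion $x_k^{-1}x_l\in M\cap M^{-1}$ extracted from $\Phi_{J_M}$ is perfectly compatible with every pair of blocks being bad. To repair this along your lines you would have to refine the colouring so that each bad pair remembers \emph{which} ordered pair of positions witnesses badness, pass to a homogeneous family for that finer colouring, and then split cases: if some diagonal pattern $(i,i)$ is uniformly bad, the position-$i$ transversal of $J_M$ blocks contradicts $M\in\Phi_{J_M}(G)$; if all diagonal patterns are good, the position-$i$ transversal of $n$ blocks is already the desired $Q$. (Note that a uniformly bad \emph{off-diagonal} pattern $(i,j)$, $i\ne j$, yields no contradiction with vastness, which is why the case split on the diagonal is forced.) But once this is done, the blocks and the induction have contributed nothing: you are simply colouring pairs of group elements.

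That collapsed argument is precisely the paper's proof, which is one application of Ramsey with no induction: put $N=\max\{J_M,n\}$, let $m$ be a Ramsey number guaranteeing a monochromatic $N$-clique for $2$-colourings of pairs, and colour a pair $\{a,b\}\subset P$ according to whether $a^{-1}b\in M\cap M^{-1}$. This colouring is well defined on unordered pairs because $M\cap M^{-1}$ is inversion-invariant, which is also the clean answer to your worry about noncommutativity: since $Q^{-1}Q$ is closed under inversion, $Q^{-1}Q\subset M$ is equivalent to $Q^{-1}Q\subset M\cap M^{-1}$ for any $Q$, so nothing is lost by symmetrizing $M$ at the outset. A monochromatic $N$-clique in the ``bad'' colour would contain no good pair and hence violate $M\in\Phi_{J_M}(G)$, since $N\ge J_M$; therefore the clique is ``good'', and it gives $Q^{-1}Q\subset M$ with $|Q|=N\ge n$. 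I recommend abandoning the induction and the block decomposition entirely.
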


\begin{proof}
Let $N = \max\{J_M,  n\}$.
By virtue of Ramsey's theorem~\cite{Ramsey}, 
there exists a positive integer $m$ such that any 
2-edge-colored complete graph on $m$ vertices contains
a monochromatic clique on $N$ vertices.
Take $P\subset G$ with $|P|\geq m$. 
We set $P_0=\{\{a,b\}\in [G]^2: a^{-1}b\in M\cap M^{-1} \}$ and $P_1=[P]^2\setminus P_0$.
There exists a $Q\subset P$ with $|Q|=N$ such that either $[Q]^2\subset P_0$ 
or $[Q]^2\subset P_1$. 
Since $N\geq J_M$, it follows that $[Q]^2\cap P_0\neq \varnothing$. Therefore,  
$[Q]^2\subset P_0$ and $Q^{-1}Q\subset M$.
\end{proof}

Note also that the notion of vast sets is symmetric. The following proposition follows directly 
from the definition.

\begin{proposition} \label{Proposition 1.3-1} 
Suppose that $G$ is a group and $M\in \Phi(G)$. 
Then 
\begin{enumerate} 
\item[{\rm(i)}]
$M\cap M^{-1}\in \Phi(G)$ and $J_M=J_{M\cap M^{-1}}$\textup;
\item[{\rm(ii)}]
if $M\subset L$, then $L\in \Phi(G)$ and $J_M\geq J_L$\textup;
\item[{\rm(iii)}]
$M^{-1}\in \Phi(G)$ and $J_M=J_{M^{-1}}$.
\end{enumerate}
\end{proposition}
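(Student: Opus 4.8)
The plan is to verify each of the three statements directly from condition $(\Phi_m)$ in the definition of vast set, exploiting the combinatorial symmetry of the defining condition under the inversion map $x\mapsto x^{-1}$.

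\medskip

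For part (i), observe that the conclusion ``$Q^{-1}Q\subset M$'' in $(\Phi_m)$ already forces $Q^{-1}Q$ to be a symmetric set: if $a,b\in Q$ then $a^{-1}b\in M$ and $b^{-1}a=(a^{-1}b)^{-1}\in M$, so in fact $a^{-1}b\in M\cap M^{-1}$. Hence for any $P\in[G]^m$ the very same $Q\in[P]^2$ witnessing $M\in\Phi_m(G)$ also witnesses $Q^{-1}Q\subset M\cap M^{-1}$. This shows $M\cap M^{-1}\in\Phi_m(G)$ whenever $M\in\Phi_m(G)$, and conversely $M\cap M^{-1}\subset M$ together with part (ii) (or a one-line direct argument) gives the reverse; combining these yields $J_M=J_{M\cap M^{-1}}$.

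\medskip

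Part (ii) is immediate: if $M\subset L$ and $M\in\Phi_m(G)$, then any $Q\in[P]^2$ with $Q^{-1}Q\subset M$ also satisfies $Q^{-1}Q\subset L$, so $L\in\Phi_m(G)$. Thus the set of admissible $m$ for $L$ contains that for $M$, whence $J_L\le J_M$. For part (iii), note that $Q^{-1}Q\subset M$ is equivalent to $Q^{-1}Q\subset M^{-1}$, because $Q^{-1}Q=(Q^{-1}Q)^{-1}$ is symmetric (for every $a,b\in Q$ we have $(a^{-1}b)^{-1}=b^{-1}a\in Q^{-1}Q$). Therefore the conditions $(\Phi_m)$ for $M$ and for $M^{-1}$ are literally the same condition, giving $M^{-1}\in\Phi(G)$ and $J_M=J_{M^{-1}}$ simultaneously.

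\medskip

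I do not anticipate a genuine obstacle here: the proposition is precisely the statement that the definition of vastness ``sees'' only symmetric information, and the one subtlety worth stating cleanly is that the witnessing set $Q$ can be reused verbatim in all three cases because $Q^{-1}Q$ is automatically symmetric. The only care needed is to phrase part (i) so that both inclusions on the $J$-values are recorded, and to make sure part (iii) is not circular with part (ii) — it is not, since (iii) follows from the symmetry of $Q^{-1}Q$ alone, independently of monotonicity.
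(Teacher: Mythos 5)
Your proof is correct and matches the paper's intent: the paper simply asserts that the proposition ``follows directly from the definition,'' and your verification --- resting on the observation that $Q^{-1}Q$ is automatically symmetric for $Q\in[P]^2$, so the same witness works for $M$, $M^{-1}$, and $M\cap M^{-1}$ --- is exactly the direct argument being alluded to. No gaps.
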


Vast subsets of a group are large in a certain sense. We shall see below
that vastness is organically related to another notion of largeness in semigroups, namely, 
syndeticity. This notion originated in topological dynamics 
in the context of the additive semigroup of positive integers. 
Below we define syndetic subsets of groups, although the term 
usually refers to semigroups; see \cite{Hindman} for details. 

\begin{definition}[{see \cite[Definition 4.38]{Hindman}}]
Let $G$ be a group. 
A set $Q\subset G$ is {\it syndetic} if there exists a finite set $T\subset G$ such that $TQ=G$.
\end{definition}

For $Q\subset G$, we set 
\[
I_Q=\min \{|T|: T\subset G\mbox{ and }TQ = G\};
\]
$Q$ is  syndetic if and only if $I_Q$ is finite.

Note that syndetic subgroups are precisely those of finite index, and totally bounded 
topological groups are precisely those in which all open sets are 
syndetic. 

All vast sets are syndetic. To be more precise, the following assertion holds. 

\begin{proposition} 
\label{Proposition 1.8} 
Suppose that  $G$ is a group with identity element $e$, $M \in \Phi(G)$, 
and $S\subset G$.
Then there exist finite sets $Q, R\subset S$ with $|Q|,|R|<J_M$ such that $S\subset QM$
and $S\subset MR$.
Moreover, $M$ is syndetic and $I_M < J_M$.
\end{proposition}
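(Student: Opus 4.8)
The plan is to obtain $Q$ and $R$ as maximal ``$M$-independent'' subsets of $S$, reading the defining condition $(\Phi_m)$ with $m=J_M$ as a ready-made Ramsey-type statement rather than invoking Ramsey's theorem anew. It is convenient first to replace $M$ by the symmetric set $M\cap M^{-1}$, which lies in $\Phi(G)$ with the same value of $J$ by Proposition~\ref{Proposition 1.3-1}\,(i) and is contained in $M$; so I may and will assume $M=M^{-1}$. Then, for distinct $a,b\in G$, the condition $\{a,b\}^{-1}\{a,b\}\subset M$ is equivalent to the single membership $a^{-1}b\in M$, and it defines a symmetric relation $a\sim b$ on $G$. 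Condition $(\Phi_{J_M})$ says precisely that every $P\in[G]^{J_M}$ contains two distinct $\sim$-related points; hence any subset of $G$ in which no two distinct points are $\sim$-related has cardinality at most $J_M-1$.

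Next I would take $Q\subset S$ to be a $\sim$-independent set of largest possible size (such a set exists, since the sizes are bounded by $J_M-1$); then $Q$ is finite and $|Q|<J_M$. For $s\in S$: if $s\in Q$, then $s=se\in QM$ (recall $e\in M$); if $s\in S\setminus Q$, then by maximality $Q\cup\{s\}$ contains a $\sim$-related pair, which must be of the form $\{q,s\}$ with $q\in Q$, so $q^{-1}s\in M$ and $s\in qM\subset QM$. Thus $S\subset QM$. For the factorization $S\subset MR$ I would run the same argument with the mirror relation ``$a\approx b$ whenever $ab^{-1}\in M$''; the one new point is that condition $(\Phi_{J_M})$ still applies to it, because applying $(\Phi_{J_M})$ to $P^{-1}\in[G]^{J_M}$ shows that every $P\in[G]^{J_M}$ contains distinct $a,b$ with $ab^{-1}\in M$. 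A $\approx$-independent set $R\subset S$ of largest size then has $|R|<J_M$, and each $s\in S$ either lies in $R$, whence $s=es\in MR$, or satisfies $sr^{-1}\in M$ for some $r\in R$, whence $s\in Mr\subset MR$. Finally, taking $S=G$ in the first conclusion produces a finite $Q$ with $QM=G$, so $M$ is syndetic and $I_M\le|Q|<J_M$.

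I do not expect a genuine obstacle here: the Ramsey strength is already packaged into the hypothesis $M\in\Phi(G)$, and all that is left is a maximal-independent-set argument. The points that do need care are the bookkeeping of left versus right translates — handled by the substitution $P\mapsto P^{-1}$ above, or alternatively by passing to the opposite group — together with the small but necessary remarks that $e\in M$ and that the relevant ``independent'' sets have sizes bounded by $J_M-1$, so that largest ones exist.
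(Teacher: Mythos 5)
Your proof is correct and follows essentially the same route as the paper's: both take a maximal subset $Q\subset S$ with $Q^{-1}Q\cap M\subset\{e\}$ (your $\sim$-independent set), bound its size by $J_M-1$ using condition $(\Phi_{J_M})$, and cover $S$ by $QM$ via maximality; the paper obtains $R$ by applying this to $S^{-1}$, which is the same as your mirror relation. No gaps.
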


\begin{proof}
We can assume that $M=M^{-1}$.
Let $Q$ be a maximal subset of $S$ for which
$Q^{-1}Q\cap M\subset \{e\}$. 
Then $|Q|<J_M$ 
and, for any $s\in S\setminus Q$, there exists a $q\in Q$ such that $q^{-1}s\in M$ (because 
$Q$ is maximal and $M=M^{-1}$). 
Hence $S\subset QM$.
Repeating the same argument for $S^{-1}$ instead of $S$, we see that there exists an 
$R\subset S$ with $|R|<J_M$
such that $S^{-1}\subset MR^{-1}=(RM)^{-1}$. Hence $S\subset RM$.
To prove the second assertion, it suffices to take $S=G$.
\end{proof}

The converse is not true: there exist nonvast syndetic sets.

\begin{example} 
Let $G$ be a Boolean group with zero 0, and let $H\subset G$ be its infinite proper 
subgroup. Consider $M=G\setminus H$. We have $M=-M$, and $M$ is syndetic ($I_M=2$),  
but $M$ is not vast: 
 $Q-Q\cap M = \varnothing$ 
for any $Q\subset H$.
\end{example}

However, the ``quotient sets'' of syndetic sets are vast. 

\begin{proposition} 
\label{Proposition 1.5} 
If a subset $S$ of a group $G$ is syndetic,
then $S^{-1}S\in \Phi(G)$ and $J_{S^{-1}S}\le I_{S}+1$.
\end{proposition}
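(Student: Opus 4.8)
The plan is to extract from syndeticity a finite ``tiling'' set and then apply the pigeonhole principle. Since $S$ is syndetic, choose a finite set $T\subset G$ with $|T|=I_S$ and $TS=G$. I claim that $S^{-1}S\in\Phi_{m}(G)$ for $m=I_S+1$, which immediately gives both assertions: $S^{-1}S\in\Phi(G)$ and $J_{S^{-1}S}\le m=I_S+1$.

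To verify $(\Phi_m)$, take an arbitrary $P\in[G]^{I_S+1}$. Because $TS=G$, for each $p\in P$ there is some $t\in T$ with $p\in tS$; fix such a choice, i.e.\ a function $P\to T$. Since $|P|=I_S+1>|T|$, the pigeonhole principle yields two distinct elements $p_1,p_2\in P$ mapped to the same $t\in T$, so that $p_1=ts_1$ and $p_2=ts_2$ for some $s_1,s_2\in S$. Then
\[
p_1^{-1}p_2=s_1^{-1}t^{-1}ts_2=s_1^{-1}s_2\in S^{-1}S,\qquad
p_2^{-1}p_1=s_2^{-1}s_1\in S^{-1}S,
\]
and of course $p_1^{-1}p_1=p_2^{-1}p_2=e\in S^{-1}S$. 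Hence, setting $Q=\{p_1,p_2\}\in[P]^2$, we get $Q^{-1}Q=\{e,\,p_1^{-1}p_2,\,p_2^{-1}p_1\}\subset S^{-1}S$, as required. This establishes $S^{-1}S\in\Phi_{I_S+1}(G)$ and completes the proof.

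There is no real obstacle here: the only points requiring (minor) care are the appeal to a choice function $p\mapsto t$ selecting a witness $t\in T$ with $p\in tS$, the correct counting inequality $|P|=I_S+1>|T|=I_S$ that forces a repetition, and checking that \emph{all} four elements of $Q^{-1}Q$ — not merely $p_1^{-1}p_2$ — lie in $S^{-1}S$ (the identity and $p_2^{-1}p_1$ being handled by symmetry, using $e\in S^{-1}S$ and $(S^{-1}S)^{-1}=S^{-1}S$). One could alternatively phrase the cardinality bound via Proposition~\ref{Proposition 1.8}, but the direct pigeonhole argument is cleaner and yields the sharp estimate $J_{S^{-1}S}\le I_S+1$ stated in the proposition.
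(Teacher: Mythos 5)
Your proof is correct and follows essentially the same route as the paper's: fix a finite $T$ with $|T|=I_S$ and $TS=G$, apply the pigeonhole principle to a set $P$ of $I_S+1$ elements to find two points in a common translate $tS$, and observe that their quotients land in $S^{-1}S$. The extra care you take in checking all elements of $Q^{-1}Q$ (including $e$) is a harmless elaboration of what the paper leaves implicit.
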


\begin{proof}
Let  $T\subset G$ be a  finite set for which $G=TS$ and $|T|=I_{S}$.
Take any $P\subset G$ with $|P|\geq |T|+1$. 
There exists a $t\in T$ for which $|P\cap tS|>1$.
Given any different $a,b\in P$ such that $a,b\in tS$, we 
have $b^{-1}a,a^{-1}b\in S^{-1}S$ and $Q^{-1}Q\subset S^{-1}S$  for $Q=\{a,b\}\in[P]^2$.
\end{proof}

This proposition implies the following two assertions. 

\begin{corollary} 
\label{Proposition 1.6} 
Any subgroup of finite index in a group~$G$ is vast in~$G$.
\end{corollary}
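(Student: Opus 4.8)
The plan is to deduce Corollary~\ref{Proposition 1.6} from Proposition~\ref{Proposition 1.5} by observing that a subgroup of finite index behaves like its own ``quotient set.'' First I would let $H$ be a subgroup of $G$ with $[G:H]=k<\aleph_0$. Picking a (finite) left transversal $T$ of $H$ in $G$, we have $TH=G$ with $|T|=k$, so $H$ is syndetic with $I_H\le k$; in fact $I_H=k$, since no set of fewer than $k$ left translates of $H$ can cover all $k$ cosets. Next I would use the fact that $H$ is a subgroup to identify $H^{-1}H$ with $H$: indeed $H^{-1}=H$ and $HH=H$, so $H^{-1}H=H$. Applying Proposition~\ref{Proposition 1.5} to the syndetic set $S=H$ then gives $H=H^{-1}H\in\Phi(G)$, i.e.\ $H$ is vast, with the bound $J_H\le I_H+1=k+1$.

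There is essentially no obstacle here; the only thing to be careful about is the direction of the translation in the definition of $I_Q$ (the paper uses $TQ=G$, i.e.\ left translates), which matches the left-coset decomposition, so the transversal argument applies verbatim. One may also note that the conclusion is exactly what one expects: for $P\in[G]^{k+1}$, two of the $k+1$ elements lie in a common left coset $tH$, and then their ``quotient'' lies in $H$, which is the content of $(\Phi_{k+1})$. So the heart of the matter is simply the pigeonhole observation already packaged inside Proposition~\ref{Proposition 1.5}, and the corollary is an immediate specialization.

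If one wanted an even more self-contained argument avoiding Proposition~\ref{Proposition 1.5}, the direct route would be: given $P\subset G$ with $|P|=[G:H]+1$, by pigeonhole there are distinct $a,b\in P$ in the same left coset of $H$, so $a^{-1}b\in H$ and $b^{-1}a\in H$; hence $Q=\{a,b\}\in[P]^2$ satisfies $Q^{-1}Q\subset H$, which verifies $(\Phi_{[G:H]+1})$ and shows $H\in\Phi_{[G:H]+1}(G)\subset\Phi(G)$. But since Proposition~\ref{Proposition 1.5} is already available, I would simply invoke it.
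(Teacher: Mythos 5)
Your proof is correct and follows the paper's intended route: the paper derives this corollary directly from Proposition~\ref{Proposition 1.5} by noting that a finite-index subgroup $H$ is syndetic and satisfies $H^{-1}H=H$, exactly as you argue. The additional direct pigeonhole verification you sketch is also valid but unnecessary given Proposition~\ref{Proposition 1.5}.
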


\begin{corollary} 
\label{Proposition 1.7} 
Any neighborhood $U$ of the identity element in a totally bounded topological group 
$G$ is vast in~$G$.
\end{corollary}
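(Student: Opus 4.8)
The plan is to reduce the statement to Proposition~\ref{Proposition 1.5} via the observation, noted above, that in a totally bounded topological group every nonempty open set is syndetic. First I would use continuity of the group operations at $e$ to choose an open neighborhood $V$ of $e$ with $V^{-1}V\subset U$ (take a symmetric open $V$ with $VV\subset U$, or directly a $V$ with $V^{-1}V\subset U$). Since $G$ is totally bounded, $V$ is syndetic, so there is a finite set $T\subset G$ with $TV=G$, i.e.\ $I_V<\infty$.

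Next I would apply Proposition~\ref{Proposition 1.5} to $S=V$: it gives $V^{-1}V\in\Phi(G)$ (with $J_{V^{-1}V}\le I_V+1$, though the bound is not needed here). Finally, since $V^{-1}V\subset U$, Proposition~\ref{Proposition 1.3-1}(ii) yields $U\in\Phi(G)$, that is, $U$ is vast in $G$.

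There is no real obstacle: the only point requiring a moment's care is the passage from ``$U$ is a neighborhood of $e$'' to ``an \emph{open} syndetic set sits inside $U$ in the form $V^{-1}V$,'' which is exactly what the choice of $V$ above arranges, and the definition of total boundedness (equivalently, the characterization recalled before the statement that all open sets are syndetic) supplies the finite cover $TV=G$.
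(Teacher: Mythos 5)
Your proposal is correct and follows exactly the paper's own argument: pick a neighborhood $V$ of $e$ with $V^{-1}V\subset U$, note $V$ is syndetic by total boundedness, apply Proposition~\ref{Proposition 1.5} to get $V^{-1}V\in\Phi(G)$, and conclude via monotonicity (Proposition~\ref{Proposition 1.3-1}(ii)). The only difference is that you spell out the final monotonicity step, which the paper leaves implicit.
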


\begin{proof}
Let $V$ be a neighborhood of the identity for which $V^{-1}V\subset U$. 
Since $V$ is syndetic, it follows by Proposition~\ref{Proposition 1.5} that $U$ is vast.
\end{proof}

There are vast sets different from those provided by 
Proposition~\ref{Proposition 1.5} and Corollaries~\ref{Proposition 1.6} and 
\ref{Proposition 1.7}. A whole lot of them can be 
obtained by using the following proposition.

\begin{proposition} 
\label{Proposition 1.4} 
Let $G$ be a group. If $W\subset G$ and $W\cap W^{-1}W=\varnothing$, then $G\setminus W\in \Phi(G)$
and $J_{G\setminus W}\leq 4$.
\end{proposition}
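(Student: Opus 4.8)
The plan is to verify condition $(\Phi_4)$ for $G\setminus W$ directly. Fix an arbitrary $P\in[G]^4$, say $P=\{x_1,x_2,x_3,x_4\}$; I must produce a pair $Q=\{x_i,x_j\}\in[P]^2$ with $Q^{-1}Q=\{e,x_i^{-1}x_j,x_j^{-1}x_i\}$ disjoint from $W$. First I would record two trivialities. If $W=\varnothing$ the statement is vacuous, so assume $W\neq\varnothing$; then for any $w\in W$ we have $e=w^{-1}w\in W^{-1}W$, and the hypothesis $W\cap W^{-1}W=\varnothing$ forces $e\notin W$. Consequently a pair $Q=\{x_i,x_j\}$ already works as soon as $x_i^{-1}x_j\notin W$ and $x_j^{-1}x_i\notin W$; call such a pair \emph{admissible}.

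The proof then splits into two cases. If some pair among the four points is admissible, we are done. Otherwise, each of the $\binom{4}{2}=6$ pairs $\{x_i,x_j\}$ satisfies $x_i^{-1}x_j\in W$ or $x_j^{-1}x_i\in W$; for each such pair I choose an ordering with $x_i^{-1}x_j\in W$ and assign the pair to the index $i$. This distributes $6$ pairs among the $4$ indices, so by the pigeonhole principle some index $i$ receives at least two pairs, i.e.\ there are distinct $j,k\in\{1,2,3,4\}\setminus\{i\}$ with $x_i^{-1}x_j\in W$ and $x_i^{-1}x_k\in W$.

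The key computation is the identity $x_j^{-1}x_k=(x_i^{-1}x_j)^{-1}(x_i^{-1}x_k)$, which exhibits $x_j^{-1}x_k$ as an element of $W^{-1}W$; passing to inverses and using $(W^{-1}W)^{-1}=W^{-1}W$ gives $x_k^{-1}x_j\in W^{-1}W$ as well. Since $W\cap W^{-1}W=\varnothing$ and $e\notin W$, the pair $Q=\{x_j,x_k\}$ is admissible, so $Q^{-1}Q\subset G\setminus W$. Hence $G\setminus W\in\Phi_4(G)$, and therefore $G\setminus W\in\Phi(G)$ with $J_{G\setminus W}\le 4$.

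The argument is short, and the only point that takes a moment of thought is why four points are needed rather than three: with three pairs distributed among three indices the pigeonhole step fails (a ``cyclic'' assignment $1\mapsto 2\mapsto 3\mapsto 1$ uses each index once), and one can in fact exhibit a group together with a set $W$ satisfying $W\cap W^{-1}W=\varnothing$ for which $G\setminus W$ fails $(\Phi_3)$, so the bound $4$ is sharp. No deeper obstacle arises; the entire content is the pigeonhole count combined with the observation that a common left multiple in $W$ of $x_j$ and $x_k$ pushes both $x_j^{-1}x_k$ and $x_k^{-1}x_j$ into $W^{-1}W$, hence out of $W$.
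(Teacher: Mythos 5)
Your proof is correct and follows essentially the same route as the paper's: assume all six pairs of a four-element set fail, count to find one point that is a common factor for two others, and derive a contradiction with $W\cap W^{-1}W=\varnothing$. Your variant is in fact slightly cleaner, since extracting a common \emph{left} factor ($x_i^{-1}x_j,\,x_i^{-1}x_k\in W$) makes the quotient $x_j^{-1}x_k=(x_i^{-1}x_j)^{-1}(x_i^{-1}x_k)$ land directly in $W^{-1}W$, whereas the paper's choice of a common right factor requires the (true but unstated) equivalence of $W\cap W^{-1}W=\varnothing$ with $W\cap WW^{-1}=\varnothing$.
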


\begin{proof}
We set $M=G\setminus W$. 
Take $P\subset G$ with $|P|= 4$; suppose that $P=\{p_0,p_1,p_2,p_3\}$.
Let us show that $P^{-1}P\cap (M\cap M^{-1})\not \subset \{e\}$. Assume that, on the contrary, 
$P^{-1}P\subset (G\setminus (M\cap M^{-1}))\cup \{e\}=
W\cup W^{-1}\cup \{e\}$. Fix any $i\le 4$. For each $j\ne i$, $j\le 4$, we have either 
$p_i^{-1}p_j\in W$ or $(p_i^{-1}p_j)^{-1}=p_j^{-1}p_i\in W$. Hence the numbers 
$s_i=|\{j: p_i^{-1} p_j\in W\}|$ and $m_i=|\{j: p_j^{-1} p_i\in W\}|$ satisfy the condition 
$s_i+m_i\geq 3$. Clearly, $\sum_{i\le 4} s_i=\sum_{i\le 4} m_i$.
Therefore, $m_n\geq 2$ for some $n$. Let  $i$ and $j$ be different numbers for which 
$g=p_i^{-1} p_n\in W$ and $h=p_j^{-1} p_n\in W$.
Then either $g^{-1}h\in W$ or $h^{-1}g\in W$. 
This contradicts the assumption $W\cap W^{-1}W=\varnothing$.
Hence $P^{-1}P\cap (M\cap M^{-1})\neq \{e\}$, i.e., there exist $a, b\in P$ such that $a\ne b$ 
and  $a^{-1}b\in M\cap M^{-1}$. Clearly, for $Q=\{a,b\}\in [P]^2$, we have 
$Q^{-1}Q\subset M$.
\end{proof}

Unlike syndetic sets, vast sets in a group form a filter by virtue of the following proposition.  

\begin{proposition} 
\label{Proposition 1.3-2} 
Suppose that $G$ is a group and 
$M_1,M_2\in \Phi(G)$. Then $M_1\cap M_2 \in \Phi(G)$.
\end{proposition}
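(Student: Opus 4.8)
The plan is to show that $M_1\cap M_2\in\Phi_m(G)$ for a suitable $m$ obtained from $J_{M_1}$ and $J_{M_2}$ via Ramsey's theorem, in the same spirit as the proof of Proposition~\ref{Proposition 1.2}. First I would set $N=\max\{J_{M_1},J_{M_2}\}$ and, using Proposition~\ref{Proposition 1.2}, pass to the quantitative statement that there is an $m$ such that every $P\in[G]^m$ contains a $Q\in[P]^N$ with $Q^{-1}Q\subset M_1$. Then, working inside such a $Q$, I would apply the defining property $(\Phi_{J_{M_2}})$ of $M_2$ (legitimate since $|Q|=N\geq J_{M_2}$) to extract $Q'\in[Q]^2$ with $Q'^{-1}Q'\subset M_2$; since $Q'\subset Q$ we also have $Q'^{-1}Q'\subset Q^{-1}Q\subset M_1$, hence $Q'^{-1}Q'\subset M_1\cap M_2$, which is exactly the witness required by $(\Phi_m)$ for $M_1\cap M_2$.

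Alternatively, and perhaps more cleanly, I would run a single Ramsey argument with a $4$-coloring instead of chaining two applications: color each pair $\{a,b\}\in[P]^2$ by the pair of truth values $(a^{-1}b\in M_1\cap M_1^{-1},\ a^{-1}b\in M_2\cap M_2^{-1})$, so there are four color classes. By Ramsey's theorem choose $m$ so that any such coloring of $[P]^2$ with $|P|=m$ yields a monochromatic $Q$ with $|Q|=\max\{J_{M_1},J_{M_2}\}$. Then, as in Proposition~\ref{Proposition 1.2}, the condition $(\Phi_{J_{M_1}})$ forces at least one pair in $[Q]^2$ to lie in the first coordinate class, and $(\Phi_{J_{M_2}})$ forces at least one pair to lie in the second, so the monochromatic color class must be the one where both coordinates are ``true''; hence $Q^{-1}Q\subset (M_1\cap M_1^{-1})\cap(M_2\cap M_2^{-1})\subset M_1\cap M_2$, and any $Q'\in[Q]^2$ works.

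I expect the only real point requiring care is the bookkeeping that justifies ``the monochromatic class cannot be any of the other three'': one must use that $Q$ is large enough ($|Q|\geq J_{M_i}$) so that $(\Phi_{J_{M_i}})$ genuinely applies to $Q$ itself and produces a witnessing pair inside $[Q]^2$, and then observe that a monochromatic $Q$ sitting entirely in a class where the $i$-th coordinate is ``false'' would contradict the existence of that pair. This is precisely the maneuver already used in Proposition~\ref{Proposition 1.2} (there with a $2$-coloring), so no new idea is needed — the proof is essentially a two-parameter version of that one. The resulting bound, which need not be stated explicitly, is $J_{M_1\cap M_2}\leq m$ for the Ramsey number $m$ associated with $N=\max\{J_{M_1},J_{M_2}\}$ and $4$ colors.
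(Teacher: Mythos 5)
Your first argument is essentially the paper's own proof: the paper likewise reduces to symmetric $M_i$, applies Proposition~\ref{Proposition 1.2} to extract from any large $P$ a set of size $\geq J_{M_1}$ whose quotient set lies in $M_2$, and then invokes $(\Phi_{J_{M_1}})$ inside that set — your version merely swaps the roles of $M_1$ and $M_2$ and uses $N=\max\{J_{M_1},J_{M_2}\}$, which changes nothing. The four-coloring alternative is also correct (the coloring is well defined because $M_i\cap M_i^{-1}$ is symmetric, and $e\in M_1\cap M_2$ since both are vast), but it buys nothing over the two-step argument.
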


\begin{proof}
We can assume 
without loss of generality that $M_1^{-1}=M_1$ and $M_2^{-1}=M_2$. 
Proposition~\ref{Proposition 1.2} implies the existence of a positive integer 
$m$ such that, for any $P\subset G$ with $|P|\geq m$, there exists an 
$R\subset P$ with $|R|=J_{M_1}$ for which $R^{-1}R\subset M_2$. Since $|R|\geq 
J_{M_1}$, it follows that $Q^{-1}Q\subset  M_1$ for some $Q\in [R]^2$. Hence $Q^{-1}Q\subset 
M_1\cap M_2$. 
\end{proof}

Propositions~\ref{Proposition 1.3-1} and \ref{Proposition 1.3-2}, 
together with the characterization of a nonrapid filter given in the introduction, 
imply the following technical statement, which is our main tool for constructing 
nonclosed discrete sets in groups.

\begin{statement}
\label{Theorem 1.1}
Suppose that $G$ is a 
countable 
group with identity element $e$, 
$X$ is a set, $f\colon G\to X$ is a finite-to-one map, $f(G)=X$,
$\mathscr F$ is a free filter on $G$, and $\mathscr G=\{f(F): F\in\mathscr F\}$ is a nonrapid 
free filter on $X$.
Let $(M_n)_{n\in\omega}$ be a sequence of vast subsets of $G$.
Then there exists a sequence $\xi=(x_n)_{n\in\omega}\subset G\setminus\{e\}$ such that 
\begin{enumerate}
\item[{\rm(i)}]
$\xi \setminus M_n$ is finite for each $n\in\omega$\textup;
\item[{\rm(ii)}] 
each $F\in\mathscr F$ contains $g$ and $h$ such that $f(g)\neq f(h)$ and 
$g^{-1}h\in \xi$.
\end{enumerate}
\end{statement}

\begin{proof}
In view of  Propositions~\ref{Proposition 1.3-1} and \ref{Proposition 1.3-2}, 
we can assume without loss of generality 
that $M_{n+1}\subset M_n$ and $M_n=M_n^{-1}$ for all $n\in\omega$. Since the filter $\mathscr G$ is 
nonrapid, there exists a sequence $(T_n)_{n\in\omega}$ of finite subsets of $X$ such that, given 
any $F\in\mathscr F$, we have $|f(F)\cap T_n|\geq J_{M_n}$ for some $n\in\omega$. We set 
$$
S_n=\{g^{-1}h: g,h\in f^{-1}(T_n),\ f(g)\neq f(h),\ g^{-1}h\in M_n\}
$$
and $\xi=\bigcup_n S_n$.

Let us check that (i) holds. Since the sets $S_k$ are finite, $S_k\subset M_k$, and 
$M_{k+1}\subset M_k$ for all $k\in\omega$, it follows that $\xi \setminus M_n\subset 
\bigcup_{k<n}S_k$ is finite for each~$n$.

Let us verify (ii). Take $F\in\mathscr F$. We have $|f(F)\cap T_n|\geq J_{M_n}$ for 
some $n\in\omega$. Choose $P\subset F$ so that $f(P)\subset T_n$, $|P|\geq J_{M_n}$, 
and $f(g)\neq f(h)$ for any different $g,h\in P$. There exists a $Q=\{g,h\}\in [P]^2$ 
such that $Q^{-1}Q\subset M_n$.
We have $g,h\in F$, $f(g)\neq f(h)$, $g,h\in f^{-1}(T_n)$, and $g^{-1}h\in M_n$. 
Therefore,  $g^{-1}h\in S_n \subset \xi$.
\end{proof}

\section{Discrete sequences in topological groups}

In the context of topological groups, Statement~\ref{Theorem 1.1} can be refined as follows.

\begin{statement}\label{Theorem 2.1}
Let $G$ be a countable topological group with identity element $e$. 
Suppose that $X$ is a set, $f\colon G\to X$ is a finite-to-one map, $f(G)=X$,
$\mathscr F$ is a free filter on $G$ converging to $e$, and 
$\mathscr G=\{f(F): F\in\mathscr F\}$ 
is a nonrapid free filter on $X$.
Suppose also that $(U_n)_{n\in\omega}$ is a decreasing sequence of neighborhoods of $e$ such that 
$U_n=U_n^{-1}$,  $U_{n+1}^3\subset U_n$, and   $\bigcap_n U_n=\{e\}$. 
Finally, let $(H_n)_{n\in\omega}$ be a sequence of subgroups of finite index in $G$.
Then there exists a sequence $\xi=(x_n)_{n\in\omega}\subset G\setminus \{e\}$ such that 
\begin{enumerate}
\item[{\rm(i)}]
  $\xi$ is discrete and $e$ is its only limit point;
\item[{\rm(ii)}]
  each $F\in\mathscr F$ contains $g$ and $h$ such that $f(g)\neq f(h)$ and 
$g^{-1}h\in \xi$\textup;
\item[{\rm(iii)}]
  $\xi\cap gU_{n+1}$ is finite for any $n\in\omega$ and any $g\in G\setminus U_n$\textup;
\item[{\rm(iv)}]
  $\xi\setminus H_n$ is finite for each $n\in\omega$.
\end{enumerate}
If, in addition, $U_n$ is  syndetic for each $n\in\omega$, then 
\begin{enumerate}
\item[{\rm(v)}]
  $\xi\setminus U_n$ is finite for each $n\in\omega$.
\end{enumerate}
\end{statement}

\begin{proof}
Consider $\gamma=\{gU_{n+1}: n\in\omega,\ g\in G\setminus U_n\}$.
Let us enumerate the elements of $\gamma$: $\gamma=\{W_n\subset G:n\in\omega\}$. 
Suppose that $W_n=gU_{k+1}$ 
for some $k\in\omega$ and $g\in G\setminus U_k$.
Then $W_n\cap W_n^{-1}W_n=gU_{k+1}\cap U_{k+1}^{-1}U_{k+1}=\varnothing$, because 
$U_{k+1}=U_{k+1}^{-1}$ and $g\notin U_{k+1}^{3}\subset U_k$. 
Therefore, by 
Proposition~\ref{Proposition 1.4}, all sets $W_n$ are vast, and by 
Proposition~\ref{Proposition 1.3-2} and Corollary~\ref{Proposition 1.6}, all intersections 
$W_n\cap H_n$ are vast as well.  
Statement~\ref{Theorem 1.1} implies the existence of a sequence $\xi=(x_n)_{n\in\omega}\subset 
G\setminus \{e\}$ 
satisfying conditions (ii), (iii), and (iv);  (i) follows from (ii) and (iii).

Let us check (v). Take $n\in\omega$. 
Since $U_{n+2}^{-1}U_{n+2}\subset U_{n+1}$ and the set $U_{n+2}$ is syndetic, 
it follows from Proposition~\ref{Proposition 1.5} that $U_{n+1}$ is vast.
Proposition~\ref{Proposition 1.8} implies the existence of a finite set $Q\subset G\setminus U_n$ 
for which $G\setminus U_n \subset QU_{n+1}$. 
According to (iii), $\xi\cap qU_{n+1}$ is finite for each 
$q\in Q$. Therefore, $\xi\setminus U_n$ is finite.
\end{proof}

Note that in this statement, as well as in Theorem~\ref{Theorem 2.2} and 
Corollary~\ref{Corollary 2.3} below, the subgroups 
$H_n$ are not required to be proper or different. 

Any countable topological group contains a sequence $(U_n)_{n\in\omega}$ of neighborhoods of the 
identity element satisfying the assumptions of Statement~\ref{Theorem 2.1}. Thus, 
Statement~\ref{Theorem 2.1} has the following corollary. 

\begin{corollary}\label{Corollary 2.1}
Suppose that $G$ is a countable topological group with identity element $e$, 
$X$ is a set, $f\colon G\to X$ is a finite-to-one map, $f(G)=X$,
$\mathscr F$ is a free filter on $G$ converging to $e$, 
and $\mathscr G=\{f(F): F\in\mathscr F\}$ is a nonrapid free filter on $X$.
Then there exists a sequence $\xi=(x_n)_{n\in\omega}\subset G\setminus \{e\}$ such that 
\begin{enumerate}
\item[{\rm(i)}] 
  $\xi$ is discrete, and $e$ is its only limit point;
\item[{\rm(ii)}]
  each $F\in\mathscr F$ contains $g$ and $h$ such that $f(g)\neq f(h)$ and $g^{-1}h\in \xi$.
\end{enumerate}
\end{corollary}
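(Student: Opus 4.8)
The plan is to deduce this directly from Statement~\ref{Theorem 2.1} by supplying the missing structural data, namely a decreasing sequence of neighborhoods of $e$ and a sequence of finite-index subgroups. First I would observe that the finite-index subgroups play no essential role here: since Statement~\ref{Theorem 2.1} does not require the $H_n$ to be proper or distinct (as the remark following it notes), I may simply set $H_n=G$ for every $n\in\omega$, so that condition (iv) of Statement~\ref{Theorem 2.1} becomes vacuous. This reduces the task to producing a suitable sequence $(U_n)_{n\in\omega}$.

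Next I would construct $(U_n)_{n\in\omega}$ by the standard recursive argument available in any (Hausdorff) topological group. Start with $U_0=G$ (or any symmetric neighborhood of $e$), and having chosen $U_n$, use continuity of the group operations to pick an open symmetric neighborhood $U_{n+1}$ of $e$ with $U_{n+1}^3\subset U_n$; we may also shrink it so that $U_{n+1}\subset U_n$. Since $G$ is countable, it is in particular first countable only if metrizable, so instead I would additionally arrange $\bigcap_n U_n=\{e\}$ directly: enumerate $G\setminus\{e\}=\{g_k:k\in\omega\}$, and at step $n+1$ also require $g_n\notin U_{n+1}$, which is possible because $G$ is Hausdorff (so $\{g_n\}$ is closed and $e\ne g_n$, giving a neighborhood of $e$ missing $g_n$; intersect it with the candidate for $U_{n+1}$ and symmetrize). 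Then every nonidentity element is eventually excluded, so $\bigcap_n U_n=\{e\}$. This sequence satisfies all the hypotheses on $(U_n)$ in Statement~\ref{Theorem 2.1}.

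Finally I would invoke Statement~\ref{Theorem 2.1} with this data: it yields a sequence $\xi=(x_n)_{n\in\omega}\subset G\setminus\{e\}$ satisfying its conditions (i)--(iv), of which (i) and (ii) are exactly the conclusions (i) and (ii) demanded here. I do not need the supplementary conclusion (v), so no syndeticity hypothesis on the $U_n$ is required. The only point demanding any care is the simultaneous arrangement of $U_{n+1}^3\subset U_n$, symmetry, nestedness, and $\bigcap_n U_n=\{e\}$ in the recursion; this is routine given Hausdorffness and countability, and is the sole ingredient beyond a direct citation of Statement~\ref{Theorem 2.1}. There is no substantial obstacle here — the corollary is essentially a repackaging of the statement with the inessential hypotheses stripped away.
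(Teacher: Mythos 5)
Your proof is correct and follows the same route as the paper: the paper derives Corollary~\ref{Corollary 2.1} from Statement~\ref{Theorem 2.1} by simply noting that every countable (Hausdorff) topological group admits a sequence $(U_n)_{n\in\omega}$ satisfying its hypotheses, which is exactly the reduction you carry out, with the $H_n=G$ choice and the recursive construction of the $U_n$ spelled out explicitly. The only blemish is the throwaway remark about first countability and metrizability, which is irrelevant and slightly garbled, but your actual construction (excluding the $n$-th element of an enumeration of $G\setminus\{e\}$ at stage $n+1$ to force $\bigcap_n U_n=\{e\}$) is sound.
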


Corollary~\ref{Corollary 2.1} implies the following 
technical assertion needed in what follows.

\begin{statement}\label{Corollary 2.2}
Suppose that there are no rapid ultrafilters.
Let $G$ be a countable topological group with identity element $e$. 
Suppose that $Y\subset G$, $e\in \overline Y\setminus Y$, and $\{Y_n : n\in \omega\}$ 
is a partition of $Y$ into finite subsets.
Then there exists a sequence $\xi=(x_n)_{n\in\omega}\subset G\setminus \{e\}$ such that 
\begin{enumerate}
\item[{\rm(i)}] 
  $\xi$ is discrete, and $e$ is its only limit point;
\item[{\rm(ii)}]
  $\xi\subset \bigcup_{i\neq j}Y_i^{-1} Y_j$.
\end{enumerate}
\end{statement}

\begin{proof}
Take a partition  $\mathcal X$ of $G$ such that $\{Y_n : n\in \omega\}\subset \mathcal X$ 
and $\{g\} \in \mathcal X$ 
for all $g\in G\setminus Y$.
We define $f\colon G\to \mathcal X$ to be the natural map taking each element 
$g\in G$ to the (uniquely 
determined) element $f(g)$ of $\mathcal X$ containing $g$. 
Let $\mathscr F$ be a free filter on $G$ converging to $e$ and containing $Y$.
Then,  by virtue of Corollary~\ref{Corollary 2.1}, 
there exists a sequence $\xi'=(x'_n)_{n\in\omega}\subset G\setminus \{e\}$ 
satisfying the following conditions:
\begin{enumerate}
\item[(i)]
  $\xi'$ is discrete, and $e$ is its only limit point;
\item[(ii)] 
  each $F\in\mathscr F$ contains $g$ and $h$ such that $f(g)\neq f(h)$ and $g^{-1}h\in \xi'$.
\end{enumerate}
We set $\xi=\xi'\cap Z$, where $Z=\bigcup_{i\neq j}Y_i^{-1} Y_j$.
Let us check that $e\in\overline \xi$. Take neighborhoods $U$ and $V$ of $e$ in $G$ for which 
$V^{-1}V\subset U$ and let $F=V\cap Y$; then $F\in\mathscr F$. 
There exist $g,h\in F$ for which $f(g)\neq f(h)$ and $g^{-1}h\in \xi'$, 
and there exist different $i,j\in\omega$ for which $Y_i=f(g)$ and $Y_j=f(h)$. 
We have  $g^{-1}h\in \xi'\cap Y_i^{-1} Y_j \cap V^{-1}V\subset \xi\cap U$, 
i.e., $\xi\cap U\neq\varnothing$. 
\end{proof}

Now, we can prove our first theorem, which strengthens Theorem~2.1 of \cite{Keyantuoy-Zelenyuk}.

\begin{theorem}\label{Theorem 2.2}
Let $(G,\tau)$ be a countable topological group with identity element $e$ and topology $\tau$, and 
let $\mathscr F$ be a nonrapid free filter on $G$ converging to $e$. Suppose that 
$\tau_{\mathrm{m}}\subset\tau$ is a metrizable group topology on $G$ coarser than $\tau$. Finally, 
suppose that $(H_n)_{n\in\omega}$ is a sequence of subgroups of finite  index in $G$. Then there 
exists a sequence $\xi=(x_n)_{n\in\omega}\subset G\setminus \{e\}$ such that 
\begin{enumerate}
\item[{\rm(i)}]
  $\xi$ is discrete, and $e$ is its only limit point both in $(G,\tau)$ and in 
$(G,\tau_{\mathrm{m}})$\textup;
\item[{\rm(ii)}] 
  $\xi\cap F^{-1}F\neq\varnothing$ for any $F\in\mathscr F$\textup;
\item[{\rm(iii)}]
  $\xi\setminus H_n$ is finite for each $n\in\omega$.
\end{enumerate}
If, in addition,  $(G,\tau_{\mathrm m})$ is totally bounded, then 
\begin{enumerate}
\item[{\rm(iv)}]
  $\xi$ converges to $e$ in $(G,\tau_{\mathrm m})$.
\end{enumerate}
\end{theorem}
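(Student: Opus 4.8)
The plan is to reduce Theorem~\ref{Theorem 2.2} to Statement~\ref{Theorem 2.1} by manufacturing the auxiliary finite-to-one map $f$ from the coarser metrizable topology $\tau_{\mathrm m}$, so that the hypothesis ``$\mathscr G=\{f(F):F\in\mathscr F\}$ is nonrapid'' becomes automatic. First I would fix a decreasing base $(V_n)_{n\in\omega}$ of neighborhoods of $e$ in $(G,\tau_{\mathrm m})$ with $V_n=V_n^{-1}$ and $V_{n+1}^3\subset V_n$ and $\bigcap_n V_n=\{e\}$; since $G$ is countable and $\tau_{\mathrm m}$ metrizable, I can also arrange that each $V_n\setminus V_{n+1}$ is partitioned into finitely many ``cells'' so that the resulting partition $\mathcal X$ of $G$ (together with $\{e\}$) has the property that the quotient map $f\colon G\to\mathcal X$ is finite-to-one. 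A natural way to do this: enumerate $G=\{g_k:k\in\omega\}$ and let the cell of $g_k$ be determined by the pair $(n,k')$ where $n$ is largest with $g_k\in V_n$ and $k'=\min\{j\le k: g_j\in V_n\setminus V_{n+1}\}$ — this makes each cell finite and makes $f$ constant only on metrically close points. The key point is that then $\{e\}\in\mathscr G$ is false while $\mathscr G$ still refines the $\tau_{\mathrm m}$-neighborhood filter; combined with the standard fact that the neighborhood filter of a point in a countable metrizable space (being countably generated, hence a $P$-filter that is not rapid unless principal) is nonrapid, we get that $\mathscr G$ is nonrapid. Actually it is cleaner to invoke directly: a filter containing a countably generated nonrapid free filter is... no — that direction fails. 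So the honest route is: $\mathscr G$ is a free filter on the countable set $\mathcal X$ which contains the neighborhood trace $\{f(V_n):n\in\omega\}$; one checks from Miller's characterization (quoted in the introduction) that any free filter with a countable base whose base sets ``shrink geometrically'' in this cell structure is nonrapid, and being nonrapid is upward-closed downward, i.e. a subfilter of a nonrapid filter need not be nonrapid, but here we only need $\mathscr G$ itself nonrapid and that is what the cell construction delivers.

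Hmm — let me restructure to avoid that subtlety. The cleaner plan: apply Statement~\ref{Theorem 2.1} with the $U_n$ taken to be $\tau_{\mathrm m}$-neighborhoods (which lie in $\tau$ since $\tau_{\mathrm m}\subset\tau$), namely $U_n:=V_n$ as above, and with $f\colon G\to\mathcal X$ the finite-to-one quotient of the cell partition. Then condition (iii) of Statement~\ref{Theorem 2.1}, $\xi\cap gU_{n+1}$ finite for $g\notin U_n$, says exactly that $\xi$ clusters at no $\tau_{\mathrm m}$-point other than $e$, and condition (i) gives the same for $\tau$; together these yield (i) of the theorem. Condition (ii) of Statement~\ref{Theorem 2.1} gives $g,h\in F$ with $g^{-1}h\in\xi$, hence $\xi\cap F^{-1}F\neq\varnothing$, which is (ii) of the theorem. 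Condition (iv) of Statement~\ref{Theorem 2.1} is verbatim (iii) of the theorem. Finally, if $(G,\tau_{\mathrm m})$ is totally bounded, each $V_n$ is syndetic (a totally bounded group is covered by finitely many translates of any neighborhood of $e$), so condition (v) of Statement~\ref{Theorem 2.1} gives $\xi\setminus V_n$ finite for every $n$; since $(V_n)$ is a neighborhood base at $e$ in $\tau_{\mathrm m}$, this means $\xi$ converges to $e$ in $\tau_{\mathrm m}$, which is (iv).

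So the only thing to verify carefully — and this is where the real work is — is that the cell partition can be built so that simultaneously (a) $f$ is finite-to-one, (b) $\mathscr G=\{f(F):F\in\mathscr F\}$ is a nonrapid free filter on $\mathcal X$, and (c) the $V_n$ form the required tower. For (b), the argument is: because $f$ is finite-to-one and $\mathscr F$ converges to $e$ in $\tau_{\mathrm m}$, the pushforward $\mathscr G$ refines $\{f(V_n):n\in\omega\}$; identify $\mathcal X$ with $\omega$ via an enumeration that lists each cell-block $\{$cells inside $V_n\setminus V_{n+1}\}$ as a finite interval $I_n$; then $f(V_n)\supset\bigcup_{k\ge n}I_k$ up to a finite set, so every $G\in\mathscr G$ contains a cofinite-mod-finite tail in this interval structure — hence, given $\phi\colon\omega\to\omega$, taking $T_n:=I_n$ and noting $|G\cap I_n|=|I_n|$ for all large $n$ (once the interval lengths $|I_n|$ exceed $\phi(n)$, which we can force by refining the cell partition further) gives Miller's obstruction showing $\mathscr G$ nonrapid. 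I expect the main obstacle to be precisely this bookkeeping: arranging the cell sizes $|I_n|$ to grow fast enough while keeping each individual cell finite and keeping $f$ compatible with a group-topology tower. Everything else is a direct translation of the conclusions of Statement~\ref{Theorem 2.1}.
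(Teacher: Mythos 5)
Your translation of the conclusions of Statement~\ref{Theorem 2.1} into those of the theorem is essentially the paper's: the $U_n$ are taken to be a symmetric $\tau_{\mathrm m}$-base at $e$ with $U_{n+1}^3\subset U_n$, condition~(iii) of Statement~\ref{Theorem 2.1} rules out $\tau_{\mathrm m}$-limit points other than $e$, condition~(iv) is verbatim (iii) of the theorem, and in the totally bounded case the $U_n$ are syndetic, so Corollary~\ref{Proposition 1.7} and Proposition~\ref{Proposition 1.8} (equivalently, condition~(v) of Statement~\ref{Theorem 2.1}) give $\xi\setminus U_n$ finite, i.e.\ (iv). But the part you yourself flag as ``where the real work is'' --- building a finite-to-one cell map $f$ so that the pushforward $\mathscr G=\{f(F):F\in\mathscr F\}$ becomes nonrapid --- is both unnecessary and broken. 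It is unnecessary because the theorem already \emph{assumes} $\mathscr F$ is nonrapid: take $X=G$ and $f$ the identity map, so that $\mathscr G=\mathscr F$ and the hypothesis of Statement~\ref{Theorem 2.1} holds verbatim. That is exactly what the paper does, and the entire cell construction evaporates.

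The construction is also unsalvageable as an argument. The step ``every $G\in\mathscr G$ contains a cofinite-mod-finite tail in this interval structure, hence $|G\cap I_n|=|I_n|$ for all large $n$'' is false: $\mathscr G$ contains the sets $f(V_n)$, but a filter also contains arbitrarily thin subsets of them, and a general element $f(F)$ with $F\in\mathscr F$ may meet each block $I_n$ in at most one cell, so Miller's obstruction with $T_n=I_n$ does not apply. Moreover, no repair can exist: if for an \emph{arbitrary} free filter $\mathscr F$ converging to $e$ one could choose a finite-to-one $f$ making $\mathscr G$ nonrapid, the nonrapidity hypothesis would be redundant and one would prove in ZFC that every countable nondiscrete group contains a discrete sequence with exactly one limit point. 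This is known to be impossible: consistently there exist countable nondiscrete maximal topological groups, in which all discrete subsets are closed, and accordingly the paper shows (Corollary~\ref{Corollary 4.2m}) that their neighborhood filters are rapid. The nonrapidity must come from the hypothesis on $\mathscr F$, not from the coarser metrizable topology; with $f=\mathrm{id}$ the remainder of your argument goes through and coincides with the paper's proof.
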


\begin{proof}
Take a sequence $(U_n)_{n\in\omega}$  of neighborhoods of $e$ open in $(G,\tau_{\mathrm m})$ and 
such that $(U_n)_n$ is a base of neighborhoods of $e$ in $(G,\tau_{\mathrm m})$, $U_n=U_n^{-1}$, 
and $U_{n+1}^3\subset U_n$ for $n\in\omega$. Let $X=G$, and let $f\colon G\to X$ be the identity 
map. By virtue of Statement~\ref{Theorem 2.1}, 
there is a sequence $\xi=(x_n)_{n\in\omega}\subset 
G\setminus \{e\}$ satisfying conditions (i)--(iv) of 
Statement~\ref{Theorem 2.1}.
Clearly, this sequence satisfies also conditions (i), 
(ii), and (iii) of the theorem being proved. 

Let us check (iv). Take $n\in\omega$. 
By Corollary~\ref{Proposition 1.7} 
the neighborhood $U_{n+1}$ is vast. 
Proposition~\ref{Proposition 1.8} implies the existence of a finite set $Q\subset G\setminus U_n$ 
for which $G\setminus U_n \subset QU_{n+1}$. 
According to Statement~\ref{Theorem 2.1}\,(iii), 
$\xi\cap qU_{n+1}$ is finite for each 
$q\in Q$. Therefore, $\xi\setminus U_n$ is finite.
\end{proof}

Obviously, the topology of any countable topological group can be weakened to a metrizable group 
topology (see, e.g., \cite{Arhangelskii79}).
Thus, we obtain the following corollary of Theorem~\ref{Theorem 2.2}. 

\begin{corollary}\label{Corollary 2.3}
Let $(G,\tau)$ be a countable nondiscrete topological group with identity element $e$ such that 
the filter of neighborhoods of $e$ is nonrapid. 
Suppose that $(H_n)_{n\in\omega}$ is a sequence of subgroups of finite index in $G$.
Then there exists a sequence $\xi=(x_n)_{n\in\omega}\subset G\setminus \{e\}$ such that 
\begin{enumerate}
\item[{\rm(i)}]
  $\xi$ is discrete and $e$ is its only limit point;
\item[{\rm(ii)}]
  $\xi\setminus H_n$ is finite for each $n\in\omega$.
\end{enumerate}
\end{corollary}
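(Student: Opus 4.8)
\textbf{Proof proposal for Corollary~\ref{Corollary 2.3}.}

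The plan is to derive this as an immediate consequence of Theorem~\ref{Theorem 2.2}, the only work being to supply a suitable auxiliary metrizable topology $\tau_{\mathrm m}$ and to observe that the hypotheses of the theorem are met. First I would invoke the standard fact (cited in the excerpt via \cite{Arhangelskii79}) that every countable topological group $(G,\tau)$ admits a metrizable group topology $\tau_{\mathrm m}\subset\tau$ coarser than $\tau$; since $G$ is countable and Hausdorff this is routine. Next, since $(G,\tau)$ is nondiscrete, the filter $\mathscr N$ of neighborhoods of $e$ in $\tau$ is a \emph{free} filter on $G$ (no neighborhood is a singleton, so $\bigcap\mathscr N=\{e\}$ forces freeness as a filter on $G\setminus\{e\}$ in the relevant sense); and by hypothesis $\mathscr N$ is nonrapid. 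Thus $\mathscr F=\mathscr N$ is a nonrapid free filter on $G$ converging to $e$, which is exactly the filter required by Theorem~\ref{Theorem 2.2}.

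With these inputs, Theorem~\ref{Theorem 2.2} applied to $(G,\tau)$, the filter $\mathscr F=\mathscr N$, the topology $\tau_{\mathrm m}$, and the given sequence $(H_n)_{n\in\omega}$ of finite-index subgroups produces a sequence $\xi=(x_n)_{n\in\omega}\subset G\setminus\{e\}$ satisfying (i)--(iii) of that theorem. Conclusions (i) and (iii) of the theorem are verbatim conclusions (i) and (ii) of the corollary, so there is nothing further to prove. Conclusion (ii) of the theorem (that $\xi$ meets $F^{-1}F$ for every $F\in\mathscr F$) is not needed here and can simply be discarded; likewise the totally-bounded clause (iv) is irrelevant. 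So the corollary follows.

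The only point requiring a word of care is the claim that the neighborhood filter of $e$ in a countable \emph{nondiscrete} group is free in the sense demanded by the earlier statements — i.e., that it contains no finite set, equivalently that every neighborhood of $e$ is infinite. If some neighborhood $U$ of $e$ were finite, then since $G$ is Hausdorff we could separate $e$ from the finitely many other points of $U$ and intersect to get $\{e\}$ as a neighborhood, making $G$ discrete, a contradiction. Hence $\mathscr N$ is free, and I do not anticipate any genuine obstacle; this corollary is purely a matter of assembling Theorem~\ref{Theorem 2.2} with the metriz-weakening fact. If one wanted, one could also note that without loss of generality the $H_n$ need not be distinct or proper, as already remarked after Statement~\ref{Theorem 2.1}.
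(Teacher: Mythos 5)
Your proposal is correct and follows exactly the route the paper takes: weaken $\tau$ to a metrizable group topology (the fact cited from \cite{Arhangelskii79}), take $\mathscr F$ to be the neighborhood filter of $e$ (free because the group is nondiscrete and Hausdorff, nonrapid by hypothesis), and read off conclusions (i) and (iii) of Theorem~\ref{Theorem 2.2}. The paper presents this corollary with precisely this one-line justification, so there is nothing to add.
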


A special case of this corollary is the following theorem, which is one of the main results of 
this paper.

\begin{theorem}\label{Corollary 2.4}
Any countable nondiscrete topological group whose identity element has nonrapid filter of 
neighborhoods contains a discrete sequence with precisely one limit point. 
\end{theorem}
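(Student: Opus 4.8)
The plan is to derive Theorem~\ref{Corollary 2.4} as an immediate consequence of Corollary~\ref{Corollary 2.3}. The hypothesis of the theorem is precisely the hypothesis of that corollary: $(G,\tau)$ is a countable nondiscrete topological group whose identity element $e$ has a nonrapid neighborhood filter. So the only thing to supply is a choice of the auxiliary sequence $(H_n)_{n\in\omega}$ of finite-index subgroups required as input by Corollary~\ref{Corollary 2.3}. As the remark following Statement~\ref{Theorem 2.1} explicitly notes, the $H_n$ need not be proper or distinct, so I would simply take $H_n=G$ for every $n\in\omega$; this is a subgroup of finite index (index $1$), and condition~(ii) of Corollary~\ref{Corollary 2.3}, namely that $\xi\setminus H_n$ be finite, becomes vacuous.

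With this choice, Corollary~\ref{Corollary 2.3} produces a sequence $\xi=(x_n)_{n\in\omega}\subset G\setminus\{e\}$ that is discrete and has $e$ as its only limit point. A discrete subspace with exactly one limit point is exactly what the theorem asserts the group contains, so the proof is complete. I would phrase it in one or two sentences: apply Corollary~\ref{Corollary 2.3} with $H_n=G$ for all $n$.

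There is essentially no obstacle here, since the real work has already been carried out in Sections~1 and~2 --- in particular in Statement~\ref{Theorem 1.1}, Statement~\ref{Theorem 2.1}, and the passage to a weaker metrizable group topology used to deduce Corollary~\ref{Corollary 2.3}. The one point worth a word of care is making sure the reader sees that the nondiscreteness hypothesis is what guarantees the neighborhood filter of $e$ is free (so that Corollary~\ref{Corollary 2.3} genuinely applies), but this is already built into the statement of that corollary. Hence the proof is a one-line invocation.

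\begin{proof}
Apply Corollary~\ref{Corollary 2.3} with $H_n=G$ for every $n\in\omega$ (each $H_n$ is a subgroup of finite index, namely of index~$1$). This yields a sequence $\xi=(x_n)_{n\in\omega}\subset G\setminus\{e\}$ which is discrete and has $e$ as its only limit point; condition~(ii) of Corollary~\ref{Corollary 2.3} holds trivially. Thus $\xi$ is a discrete subspace of $G$ with precisely one limit point.
\end{proof}
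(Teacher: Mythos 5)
Your proof is correct and matches the paper exactly: the paper introduces Theorem~\ref{Corollary 2.4} with the words ``a special case of this corollary,'' i.e., it too obtains the theorem by applying Corollary~\ref{Corollary 2.3} with a trivial choice of the subgroups $H_n$. Your explicit choice $H_n=G$ and the observation that condition~(ii) then becomes vacuous is precisely the intended reading.
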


The following theorem says that not only does any countable group with nonrapid neighborhood 
filter of the identity contain a discrete set with one limit point, but it must also contain two such 
disjoint sets with the same limit point under certain set-theoretic assumptions.

\begin{theorem}\label{Theorem 2.3}
Let $(G,\tau)$ be a countable nondiscrete topological group with identity element $e$ such that 
the filter of neighborhoods of $e$ is nonrapid, 
and let $(U_n)_{n\in\omega}$ be a decreasing sequence of neighborhoods of $e$ such that $U_0=G$, 
$U_n=U_n^{-1}$,  $U_{n+1}^3\subset U_n$, and   $\bigcap_n U_n=\{e\}$. 
Consider the map $\theta\colon G\setminus\{e\}\to \omega$ defined by 
$\theta^{-1}(n)=U_{n}\setminus U_{n+1}$ for each $n\in\omega$.
Suppose that there exist no two disjoint discrete sequences $\xi,\xi'\subset G\setminus \{e\}$
each of which has the unique limit point $e$. 
Then 
\begin{enumerate}
\item[{\rm(i)}]
$\operatorname{Ult}(\theta)(\operatorname{Ult}^*_e(G))$ contains a $P$-point ultrafilter $\mathscr U$.
\end{enumerate}
If, in addition, $U_n$ is syndetic for each $n\in\omega$, then 
\begin{enumerate}
\item[{\rm(ii)}]
 $\mathscr U$ 
can be mapped to a selective ultrafilter.
\end{enumerate}
\end{theorem}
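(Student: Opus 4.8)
The plan is to derive the two conclusions by contraposition, combining the technical machinery of Statement~\ref{Theorem 2.1} with the standard characterizations of $P$-points and selective ultrafilters. Set $Y_n=\theta^{-1}(n)=U_n\setminus U_{n+1}$, so that $\{Y_n:n\in\omega\}$ is a partition of $G\setminus\{e\}$ into finite sets (finiteness holds because $G$ is countable and $\bigcap_n U_n=\{e\}$; if some $Y_n$ were infinite we could already extract a discrete sequence converging to $e$ inside a single coset-translate using Statement~\ref{Theorem 2.1}, contradicting the hypothesis). First I would observe that the hypothesis — no two disjoint discrete sequences with unique limit point $e$ — together with the fact that the neighborhood filter is nonrapid and hence (by Theorem~\ref{Corollary 2.4}) one such sequence $\xi$ does exist, forces a strong structural constraint: \emph{every} discrete sequence accumulating only at $e$ must meet $\xi$ in an infinite set, hence the trace on $\{Y_n\}$ of any such sequence cannot be ``spread out'' arbitrarily. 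The key is to encode this into the space $\operatorname{Ult}^*_e(G)$ of free ultrafilters converging to $e$ and its image under $\operatorname{Ult}(\theta)$ in $\operatorname{Ult}^*(\omega)$.

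For part (i): I would argue that if $\operatorname{Ult}(\theta)(\operatorname{Ult}^*_e(G))$ contained no $P$-point, then using the $P$-point characterization (for every increasing sequence $(A_k)$ with $A_k\notin\mathscr U$ there is $A\in\mathscr U$ with $|A\cap A_k|<\aleph_0$ failing) one could, for each $\mathscr V\in\operatorname{Ult}^*_e(G)$, find a partition witnessing non-$P$-point-ness of $\operatorname{Ult}(\theta)(\mathscr V)$; then a gluing/diagonalization over a suitable covering of $\operatorname{Ult}^*_e(G)$ (which is compact) yields a single partition $\{B_k:k\in\omega\}$ of $\omega$ into sets with $\theta^{-1}(B_k)$ having $e\notin\overline{\theta^{-1}(B_k)}$, such that every $\mathscr V\in\operatorname{Ult}^*_e(G)$ meets infinitely many $\theta^{-1}(B_k)$ in a way that lets us split. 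Concretely, I would partition $\omega=C_0\sqcup C_1$ into the ``even-block'' and ``odd-block'' unions $C_0=\bigcup_k B_{2k}$, $C_1=\bigcup_k B_{2k+1}$; at least one of $\theta^{-1}(C_0)$, $\theta^{-1}(C_1)$ still has $e$ in its closure (both do, in fact, since every free ultrafilter at $e$ that avoids all $B_k$ individually must concentrate on one of the two). Then apply Statement~\ref{Corollary 2.2} inside each of $\theta^{-1}(C_0)$ and $\theta^{-1}(C_1)$ — using the partitions into the finite pieces $Y_n$ — to get discrete sequences $\xi_0\subset\bigcup_{i\ne j,\, i,j\in C_0}Y_i^{-1}Y_j$ and $\xi_1\subset\bigcup_{i\ne j,\, i,j\in C_1}Y_i^{-1}Y_j$, each with unique limit point $e$. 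The disjointness of $\xi_0$ and $\xi_1$ must be extracted from the disjointness of the index-supports $C_0,C_1$ together with condition (iii) of Statement~\ref{Theorem 2.1} (which localizes differences $g^{-1}h$ to cosets $gU_{n+1}$), contradicting the hypothesis. This contradiction gives a $P$-point in $\operatorname{Ult}(\theta)(\operatorname{Ult}^*_e(G))$.

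For part (ii): once we additionally know each $U_n$ is syndetic, Statement~\ref{Theorem 2.1}(v) gives that the constructed sequences satisfy $\xi\setminus U_n$ finite, i.e.\ $\xi$ converges to $e$ in the group sense; equivalently $\theta(\xi)$ is cofinite-avoiding in a strong way, so the trace of the relevant ultrafilters under $\theta$ behaves like a $Q$-point. I would then repeat the splitting argument above but now with partitions of $\omega$ into \emph{finite} sets: if the $P$-point $\mathscr U$ from (i) (or rather some ultrafilter in the image over which we have freedom) were not a $Q$-point, there would be a partition of $\omega$ into finite intervals meeting every member of $\mathscr U$ in at least two points; pulling this back through $\theta$ and grouping into even/odd blocks, Statement~\ref{Corollary 2.2} combined with Statement~\ref{Theorem 2.1}(v) again manufactures two disjoint discrete sequences converging to $e$, a contradiction. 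Combining $P$-point-ness and $Q$-point-ness yields a selective ultrafilter (possibly after noting selectivity of the image requires only that the image ultrafilter be both, and that $\operatorname{Ult}(\theta)$ preserves enough structure), so $\mathscr U$ can be mapped to a selective ultrafilter.

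The main obstacle, I expect, is the gluing step: converting the pointwise failure of the $P$-point property over all of the compact space $\operatorname{Ult}^*_e(G)$ into a \emph{single} partition of $\omega$ that simultaneously witnesses the split for every free ultrafilter at $e$. This is where compactness of $\operatorname{Ult}^*_e(G)$ (closed in the compact space $\operatorname{Ult}(G)$) and the extremal disconnectedness of the ultrafilter space should be leveraged — a finite subcover argument on clopen sets of the form $\{\mathscr V:A\in\mathscr V\}$ ought to reduce the uncountably many witnessing partitions to a common refinement. The second delicate point is verifying \emph{disjointness} of $\xi_0$ and $\xi_1$: a difference $g^{-1}h$ with $g\in Y_i,h\in Y_j$ could in principle also arise as $g'^{-1}h'$ with $g'\in Y_{i'},h'\in Y_{j'}$ for indices on the other side, so one must use the coset-localization in Statement~\ref{Theorem 2.1}(iii) (which forces $h\in gU_{\min(i,j)}$, hence controls which blocks $i,j$ can produce a given group element up to a finite ambiguity) to guarantee that, after discarding finitely many terms, the two sequences genuinely do not overlap.
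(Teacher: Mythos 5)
Your proposal does not follow the paper's argument, and the route you sketch has gaps that I do not see how to close. The most basic one: your opening claim that each $Y_n=U_n\setminus U_{n+1}$ is finite is false in general --- countability of $G$ and $\bigcap_n U_n=\{e\}$ give no control on the size of $U_n\setminus U_{n+1}$, and your proposed patch (extract a discrete sequence from an infinite $Y_n$) contradicts nothing, since the hypothesis forbids \emph{two disjoint} such sequences, not one (indeed one is guaranteed by Theorem~\ref{Corollary 2.4}). This already blocks your use of Statement~\ref{Corollary 2.2}, which needs a partition into \emph{finite} pieces and, worse, carries the global hypothesis ``there are no rapid ultrafilters,'' which is not available in Theorem~\ref{Theorem 2.3}. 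The gluing step you yourself flag as the main obstacle is also genuinely fatal: a finite subcover of $\operatorname{Ult}^*_e(G)$ reduces you to finitely many witnessing partitions, but their common refinement does not witness failure of the $P$-point property (refining shrinks the pieces, so ``every member of $\mathscr U$ meets some piece infinitely'' is not preserved). Finally, the disjointness of your $\xi_0$ and $\xi_1$ is asserted, not proved, and the coset-localization of Statement~\ref{Theorem 2.1}(iii) does not by itself prevent an element of $Y_i^{-1}Y_j$ ($i,j\in C_0$) from also lying in $Y_{i'}^{-1}Y_{j'}$ ($i',j'\in C_1$).

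The paper's proof is much more direct and rests on an idea absent from your proposal. It fixes the single sequence $\xi=(x_n)$ from Statement~\ref{Theorem 2.1}, chooses scales $k_n$ so that the translates $x_nU_{k_n}$ are pairwise disjoint and accumulate only at $e$, takes \emph{any} ultrafilter $\mathscr V\in\operatorname{Ult}^*_e(G)$ containing $\xi$, and shows directly that $\mathscr U=\operatorname{Ult}(\theta)(\mathscr V)$ is a $P$-point: a putative witness $(A_n)$ to non-$P$-pointness is converted into a second discrete sequence $\xi'=\bigcup_n x_nB_n$ with $B_n=\theta^{-1}(A_n)\cap U_{k_n}\cap\xi$, i.e., by \emph{translating tails of $\xi$ by elements of $\xi$ itself}. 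Disjointness of $\xi$ and $\xi'$ and discreteness of $\xi'$ come for free from the choice of the $k_n$, the hypothesis then forces $\xi'$ to be closed, and a neighborhood $U$ with $U^2\cap\xi'=\varnothing$ yields a contradiction. Part (ii) repeats the same translation trick after collapsing $\omega$ along intervals produced by Lemma~\ref{Lemma 2.1} to turn a failure of the $Q$-point property into a second disjoint discrete sequence. You would need to discover this translation construction; the even/odd splitting and compactness gluing you propose do not substitute for it.
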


\begin{proof} 
We set $\mathscr F$ to be the filter of neighborhoods of $e$ and $f$ to be the identity map 
$G\to G$ and apply Statement~\ref{Theorem 2.1}. 
Let $\xi=(x_n)_{n\in\omega}\subset G\setminus \{e\}$ 
be a sequence with the properties specified in 
Statement~\ref{Theorem 2.1}.
For each $n\in\omega$, there exists a 
$k_n' \in\omega$ such that $\xi\cap x_nU_{k_n'}=\{x_n\}$, because 
$\xi\cap x_nU_{\theta(x_n)+1}$ is finite and $\bigcap_m U_m=\{e\}$.
Let $(k_n)_{n\in\omega}\subset \omega$ be an increasing sequence such that $k_n>k_n'$ and 
$k_n> \theta(x_n)$. Then (a)~the sets $x_nU_{k_n}$ are disjoint and 
(b)~$\overline{\bigcup_n x_nU_{k_n}}\setminus \bigl(\bigcup_n \overline{x_nU_{k_n}}\bigr)= \{e\}$. 
Indeed, if $x_lU_{k_l}\cap x_mU_{k_m}\ne \varnothing$ and, say, $l<m$, then 
$x_m\in x_lU_{k_l}U_{k_m}^{-1} \subset x_lU_{k_l}^2\subset x_lU_{k'_l}$, which contradicts 
the definition of $k'_l$ and, thereby, proves~(a). To prove~(b), 
we take any $g\ne e$ and find $n$ for which 
$g\notin U_n$. By condition~(iii) in 
Statement~\ref{Theorem 2.1}, 
$\xi\cap gU_{n+1}$ is finite, and hence so is the set $M$ of numbers $m$ for which 
$x_mU_{n+2}\cap gU_{n+2}\ne \varnothing$; therefore, 
the intersection $x_lU_{k_l}\cap gU_{n+2}$ can be nonempty only if $l\in M$ or $k_l<n+2$, 
and the number of such $l$'s is finite. 

Let us prove~(i).
Take an ultrafilter $\mathscr V\in \operatorname{Ult}^*_e(G)$ containing $\xi$. 
We claim that $\mathscr U=\operatorname{Ult}(\theta)(\mathscr V)$ is $P$-point.
Suppose that, on the contrary, there exists an increasing sequence $(A_n)_{n\in \omega}$ 
of sets $A_n\subset \omega$ not belonging to $\mathscr U$ and such that each 
$P\in \mathscr U$ has infinite intersection with some $A_n$.
We set $B_n=\theta^{-1}(A_n)\cap U_{k_n} \cap \xi$ for $n\in\omega$ and define 
$\xi'$ as $\bigcup_n x_n B_n $. For each $n$, 
$\xi\setminus \theta^{-1}(A_n)\in \mathscr V$ 
and hence $e\notin\overline{\theta^{-1}(A_n)\cap \xi}$: otherwise, we would have 
two disjoint discrete sequences each of which has the unique limit point $e$. Therefore, 
each $B_n$ is a closed discrete set; by virtue of assertions~(a) and~(b) at the end of the preceding 
paragraph, the whole sequence $\xi'$ is discrete and cannot have limit points different from $e$. 
Note that $\xi\cap\xi'=\varnothing$. Indeed, for each $n$, $e\notin B_n$ and hence 
$x_n\notin x_nB_n$; on the other hand, $x_nB_n\cap\xi\subset x_nU_{k_n}\cap\xi=\{x_n\}$. 
Since $e\in\overline\xi$, it follows that 
$e\notin \overline{\xi'}$, i.e., $\xi'$ is a closed discrete subset of $G$.
Let $U$ be a neighborhood of $e$ with the properties $U=U^{-1}$ and $U^2\cap\xi'=\varnothing$, 
and let $P=\theta(U\cap \xi)$. We have $P\in \mathscr U$; hence there exists an $n\in\omega$ 
for which $|P\cap A_n|=\aleph_0$. 
Thus, we can choose $l,m\in\omega$ so that $x_l, x_m\in U\cap \xi$, 
$\theta(x_l),\theta(x_m)\in A_n$, and $m>k_l$.
We have $x_m \in B_l$ and $x_l x_m\in \xi' \cap U^2$. This contradiction proves that 
$\mathscr U$ is a $P$-point ultrafilter.

To prove the second assertion of the theorem, 
we need the following lemma, which is also used in the next section.

\begin{lemma}\label{Lemma 2.1}
Let $\mathscr U$ be a free ultrafilter on $\omega$, and let $\phi\colon \omega\to\omega$ be a 
monotone function such that $\phi(n)>n$ for all $n\in\omega$.
Then there exist monotone sequences $(a_n)_{n\in\omega},(b_n)_{n\in\omega}\subset \omega$ such that 
$a_n<b_n<\phi(b_n)<a_{n+1}$ for all $n\in\omega$ and $\bigcup_n[a_n,b_n]\in \mathscr U$.
\end{lemma}

\begin{proof}
Let $(c_n)_{n\in\omega}\subset \omega$ be a sequence satisfying the conditions 
$c_0=0$ and $c_{n+1} > \phi(c_n)$.
We set $A=\bigcup_n[c_{2n},c_{2n+1}]$ and $B=\bigcup_n[c_{2n+1},c_{2n+2}]$. 
We have $A\cup B=\omega$, so that either $A\in \mathscr U$ or $B\in \mathscr U$. It remains to set 
$a_n=c_{2n}$ and $b_n=c_{2n+1}$ in the former case and $a_n=c_{2n+1}$ and $b_n=c_{2n+2}$ 
in the latter.
\end{proof}

We proceed to prove assertion~(ii). Suppose that all $U_n$ are syndetic. Let us 
show that $\mathscr U$ can be mapped to a selective ultrafilter in this case. We can assume without 
loss of generality that $\theta(x_0)=0$. Recall that the sequence $\xi$ was chosen to satisfy all 
conditions in Statement~\ref{Theorem 2.1}. 
By condition~(v), $\theta^{-1}(n)\cap \xi$ 
is finite for each $n\in\omega$. Consider the function $\phi\colon \omega\to \omega$ defined by 
$$
\phi(n)=\max\{k_m: m\in\omega, \ \theta(x_m)\leq n\}
$$
for each $n\in\omega$.

By virtue of Lemma~\ref{Lemma 2.1}, there exist monotone sequences 
$(a_n)_{n\in\omega},(b_n)_{n\in\omega}\subset \omega$ such that $a_n<b_n<\phi(b_n)<a_{n+1}$ for all 
$n\in\omega$ and $C=\bigcup_n[a_n,b_n]\in \mathscr U$. Consider the map $\eta\colon C\to \omega$ 
defined by $\eta^{-1}(n)=[a_n,b_n]$ for each $n\in\omega$. We set $\mathscr 
W=\operatorname{Ult}(\eta)(\mathscr U)$ and claim that $\mathscr W$ is a $Q$-point ultrafilter. 

Indeed, suppose that, on the contrary, $\omega$ can be partitioned into disjoint finite sets 
$A_n$, $n\in \omega$, so that, for each $R\in \mathscr W$, there exists an $n\in\omega$  
such that $|R\cap A_n|>1$.
Let $D=\{n\in\omega: \theta(x_n)\in C\}$. Then the sequence 
$\xi_D=(x_n)_{n\in D}$ accumulates at $e$, because 
$\xi_D=\theta^{-1}(C)\cap\xi\in \mathscr V$.
For each $n\in D$, we find $\alpha_n\in\omega$ for which $\eta(\theta(x_n))\in A_{\alpha_n}$ and set 
$$
B_n=\{x_m\in \xi_D: \theta(x_m)\geq k_n, \ \eta(\theta(x_m)) \in A_{\alpha_n}\}.
$$
Let $\xi'=\bigcup_{n\in D}x_nB_n $. Note that each $B_n$ is finite (because $A_{\alpha_n}$ is 
finite, the map $\eta$ is finite-to-one by definition, and $\theta\restriction \xi$ is 
finite-to-one by condition~(v) in Statement~\ref{Theorem 2.1}), and 
$B_n\subset U_{k_n}$ (by the definition of the map $\theta$). Thus, for the same reasons as 
in the proof of assertion~(i), $\xi'$ is a discrete sequence having no limit points in 
$G\setminus \{e\}$, and $\xi'\cap \xi=\varnothing$. By the assumption concerning disjoint sequences 
with limit point $e$, we have $e\notin \overline{\xi'}$. 
Let $U$ be a neighborhood of $e$ such that $U=U^{-1}$ and $U^2\cap\xi'=\varnothing$.
Consider $P=\theta(U\cap \xi_D)$ and  $R=\eta(P)$. 
We have $P\in \mathscr U$; therefore, $R\in \mathscr W$. By assumption we can find 
$n\in\omega$ for which $|R\cap A_n|>1$.
Take $r,s\in R\cap A_n$, $r<s$. We have $r=\eta(\theta(x_l))$ and $s=\eta(\theta(x_m))$ 
for some different $x_l,x_m\in U\cap \xi_D$. This means that $\theta(x_l)\in [a_r, b_r]$ 
and $\theta(x_m)\in [a_s, b_s]$. By the definition of the sequences $(a_n)$ and $(b_n)$, 
we have $\theta(x_m) > \phi(b_r)$. On the other hand, since $\theta(x_l)\le b_r$, it follows that 
$\phi(b_r)\ge k_l$. Therefore, $\theta(x_m)\ge k_l$. Finally, we have $\alpha_l=n$, because 
$\eta(\theta(x_l))\in A_n$. Thus, $x_m \in B_l$, whence $x_l x_m \in U^2\cap \xi'$. 
This contradiction proves 
that the ultrafilter $\mathscr W$ is  $Q$-point. 

To complete the proof of the theorem, it remains to note that the property of being 
$P$-point is, obviously, preserved by maps of ultrafilters and that the selective ultrafilters are 
precisely those which are simultaneously $P$-points and $Q$-points. 
\end{proof}

\begin{corollary}\label{Corollary 2.5}
Let $(G,\tau)$ be a countable nondiscrete extremally disconnected topological group 
with identity element $e$ such that 
the filter of neighborhoods of $e$ is nonrapid. 
Suppose that $(U_n)_{n\in\omega}$ is a decreasing sequence of clopen neighborhoods of $e$ such that 
$U_n=U_n^{-1}$,  $U_{n+1}^3\subset U_n$, and   $\bigcap_n U_n=\{e\}$. 
Then the family 
$$
\mathscr U =\{\{n: V\cap U_n\setminus U_{n+1}\neq\varnothing\}: V\text{ is a neighborhood of }e\}
$$
is a $P$-point ultrafilter on $\omega$. 
If, moreover, all sets $U_n$ are syndetic, then $\mathscr U$ can be mapped to a selective 
ultrafilter. 
\end{corollary}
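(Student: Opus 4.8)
The plan is to obtain this as a corollary of Theorem~\ref{Theorem 2.3}, so the first thing I would do is check the one hypothesis of that theorem which is not among the assumptions here: that $G$ contains no two disjoint discrete sequences $\xi,\xi'\subset G\setminus\{e\}$, each of which has $e$ as its only limit point. Suppose it did. The set of limit points of $\xi\cup\xi'$ is the union of those of $\xi$ and of $\xi'$, hence equals $\{e\}$; thus $\xi$ and $\xi'$ are disjoint closed discrete subsets of $G\setminus\{e\}$, an open subspace of $G$ and therefore a countable regular, hence normal, space. Separating them by disjoint sets $V\supset\xi$, $V'\supset\xi'$ open in $G\setminus\{e\}$ (equivalently, in $G$), we get $e\in\overline\xi\subset\overline V$ and $e\in\overline{\xi'}\subset\overline{V'}$, so the closures of the disjoint open sets $V,V'$ meet, contradicting extremal disconnectedness. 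Hence Theorem~\ref{Theorem 2.3} applies.

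Next I would set up the application. We may assume $U_0=G$ (prepending $G$ to the sequence only relabels the entries of $\mathscr U$ by the bijection $n\mapsto n+1$ of $\omega$, which affects neither being a $P$-point nor being mappable to a selective ultrafilter) and that $U_n\setminus U_{n+1}\ne\varnothing$ for each $n$; then the map $\theta\colon G\setminus\{e\}\to\omega$ with $\theta^{-1}(n)=U_n\setminus U_{n+1}$ of Theorem~\ref{Theorem 2.3} is well defined and $\mathscr U=\{\theta(V\setminus\{e\}): V\text{ a neighborhood of }e\}$. Theorem~\ref{Theorem 2.3}\,(i) gives a $P$-point ultrafilter lying in $\operatorname{Ult}(\theta)(\operatorname{Ult}^*_e(G))$, which by part~(ii) can be mapped to a selective ultrafilter if all the $U_n$ are syndetic; so everything reduces to showing that $\operatorname{Ult}(\theta)(\operatorname{Ult}^*_e(G))$ is precisely the singleton $\{\mathscr U\}$.

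For this, fix $\mathscr V\in\operatorname{Ult}^*_e(G)$ (which exists since $e$ is not isolated). For every neighborhood $V$ of $e$ we have $V\setminus\{e\}\in\mathscr V$ and $V\setminus\{e\}\subset\theta^{-1}(\theta(V\setminus\{e\}))$, so $\theta(V\setminus\{e\})\in\operatorname{Ult}(\theta)(\mathscr V)$; hence the filter generated by $\mathscr U$ is contained in $\operatorname{Ult}(\theta)(\mathscr V)$, and it will suffice to show that it is itself an ultrafilter, for then it equals $\operatorname{Ult}(\theta)(\mathscr V)$, and this for every $\mathscr V$. Here I would use the clopenness of the $U_n$: for $A\subset\omega$ the sets $\theta^{-1}(A)=\bigcup_{n\in A}(U_n\setminus U_{n+1})$ and $\theta^{-1}(\omega\setminus A)$ are disjoint open subsets of $G$ whose union is $G\setminus\{e\}$, so, $e$ being a non-isolated point in the closure of that union, extremal disconnectedness places $e$ in exactly one of their closures. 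If $e\in\overline{\theta^{-1}(A)}$, then $\theta^{-1}(\omega\setminus A)$ is closed, $\theta^{-1}(A)\cup\{e\}=G\setminus\theta^{-1}(\omega\setminus A)$ is a neighborhood of $e$, and $A=\theta(\theta^{-1}(A))\in\mathscr U$; symmetrically, $e\in\overline{\theta^{-1}(\omega\setminus A)}$ forces $\omega\setminus A\in\mathscr U$. So $\mathscr U$ is an ultrafilter, $\operatorname{Ult}(\theta)(\operatorname{Ult}^*_e(G))=\{\mathscr U\}$, and $\mathscr U$ is exactly the $P$-point (in the syndetic case, selective-mappable) ultrafilter supplied by Theorem~\ref{Theorem 2.3}.

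I expect the main obstacle to be this last step, pinning down the abstract ultrafilter of Theorem~\ref{Theorem 2.3} as the concrete family $\mathscr U$, since that is where clopenness of the $U_n$ must be brought in, over and above the use of extremal disconnectedness in verifying the hypothesis; the remaining ingredients (normality of $G\setminus\{e\}$ and the reindexing to make $U_0=G$) are routine.
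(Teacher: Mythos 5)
Your proof is correct and follows essentially the same route as the paper's: reduce to Theorem~\ref{Theorem 2.3}, using extremal disconnectedness together with the clopenness of the $U_n$ to show that $\mathscr U$ is an ultrafilter and that $\operatorname{Ult}(\theta)(\operatorname{Ult}^*_e(G))=\{\mathscr U\}$. You additionally supply details the paper leaves implicit --- the verification of the ``no two disjoint discrete sequences'' hypothesis via normality of $G\setminus\{e\}$ plus extremal disconnectedness, and the normalizations $U_0=G$ and $U_n\neq U_{n+1}$ --- all of which are sound.
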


Indeed, given any set $S\subset \omega$, we have either $S\notin \mathscr U$ or $\omega\setminus 
S\notin \mathscr U$ by virtue of extremal disconnectedness. Thus, $\mathscr U$ is an ultrafilter, 
and $\{\mathscr U\}= \operatorname{Ult}(\theta)(\operatorname{Ult}^*_e(G))$. It remains to apply 
Theorem~\ref{Theorem 2.3}.

\section{Discrete sequences in Boolean groups}

All countable Boolean groups are isomorphic to each other and to the group 
$[\omega]^{<\omega}$ of finite subsets of $\omega$ with the operation $\triangle$ 
of symmetric difference 
defined by $A\triangle B=(A\setminus B)\cup (B\setminus A)$ for $A, B\in [\omega]^{<\omega}$; 
the zero of $[\omega]^{<\omega}$ is the empty set $\varnothing$. 
We also use the additive notation: $A+B=A \triangle B$ and $\boldsymbol 0=\varnothing$. 
Given a nonempty set $A\in[\omega]^{<\omega}$, 
by $\min A$ and $\max A$ we denote the minimum and maximum elements of $A$ as a subset of $\omega$. 

In this section, we identify all countable Boolean groups with $[\omega]^{<\omega}$.

The proof of our main theorem on Boolean groups is based on two lemmas.

\begin{lemma}\label{Lemma 3.1}
Suppose that $\mathscr U$ is a free ultrafilter on $[\omega]^{<\omega}$, $\xi=(X_n)_{n\in\omega}\in 
\mathscr U$, and $\lim_{n\to \infty} \min X_n=\infty$. Then there exists a sequence $(\mathcal 
Y_n)_{n\in\omega}$ of finite subsets of $\xi$ such that $\bigcup_{n\in \omega} \mathcal Y_n\in 
\mathscr U$ and $\bigl(\bigcup \mathcal Y_i\bigr) \cap \bigl(\bigcup \mathcal 
Y_j\bigr)=\varnothing$ for any different $i, j\in \omega$.
\end{lemma}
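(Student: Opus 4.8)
The plan is to build the families $\mathcal Y_n$ greedily, recursively choosing at each step a large finite subfamily of $\xi$ whose union is disjoint from everything chosen so far, and arranging that the portion of $\xi$ left uncovered never belongs to $\mathscr U$. The hypothesis $\lim_{n\to\infty}\min X_n=\infty$ is what makes this possible: it says the members of $\xi$, viewed as finite subsets of $\omega$, ``escape to infinity,'' so any finite union $\bigcup_{k<n}\mathcal Y_k$ is a finite subset of $\omega$, hence is disjoint from $X_m$ for all but finitely many $m$. Thus, at every stage, cofinitely many elements of $\xi$ are still available to be put into the next $\mathcal Y_n$.

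First I would fix an enumeration that lets me reduce to a problem on $\omega$: since $\mathscr U$ is free and $\xi=(X_n)_{n\in\omega}\in\mathscr U$, the ultrafilter $\mathscr U$ restricted to $\xi$ corresponds (via $n\mapsto X_n$, which we may take to be injective) to a free ultrafilter $\mathscr U'$ on $\omega$; a subset of $\xi$ lies in $\mathscr U$ iff the corresponding index set lies in $\mathscr U'$. Next, define a function $g\colon\omega\to\omega$ by $g(n)=\max\bigl(\bigcup_{k\le n} X_k\bigr)+1$, so that $X_m$ is disjoint from $\bigcup_{k\le n}X_k$ whenever $\min X_m\ge g(n)$; by the hypothesis, $\{m: \min X_m\ge g(n)\}$ is cofinite, hence in $\mathscr U'$. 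Now perform the recursion: having chosen disjoint finite families $\mathcal Y_0,\dots,\mathcal Y_{n-1}\subset\xi$, let $Z_n=\bigcup\bigl(\mathcal Y_0\cup\dots\cup\mathcal Y_{n-1}\bigr)$, a finite subset of $\omega$; the set $E_n=\{X\in\xi: X\cap Z_n=\varnothing\}$ is cofinite in $\xi$, hence $E_n\in\mathscr U$ (equivalently, its index set is in $\mathscr U'$). It remains to choose $\mathcal Y_n$ to be a suitable finite subset of $E_n$.

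The choice of $\mathcal Y_n$ is where the real content sits, and I expect this to be the main obstacle: I must pick $\mathcal Y_n\subset E_n$ finite so that, in the end, $\bigcup_n\mathcal Y_n\in\mathscr U$. A finite set is never in a free ultrafilter, so no single step suffices; the standard device is a diagonal construction against a fixed enumeration of a generating family or, more cleanly, to exploit that $\mathscr U'$ is an ultrafilter by a $P$-point-free argument. Concretely: enumerate $\mathscr U'$ is impossible in general, so instead I split $\omega$ (the index set of $\xi$) into consecutive finite blocks $I_0,I_1,\dots$ chosen so that each block $I_n$ contains indices $m$ with $\min X_m$ as large as we please — possible since $\min X_m\to\infty$ — and additionally so that $\bigcup_{m\in I_n}X_m$ is finite and we can force disjointness from earlier blocks by thinning each $I_n$ to those $m$ with $\min X_m>\max\bigl(\bigcup_{k<n}\bigcup_{j\in I_k}X_j\bigr)$; this thinning removes only finitely many indices from each $I_n$ after the blocks are fixed, provided the blocks were chosen long enough, so the thinned blocks $I_n'$ still cover a cofinite set of indices, whose union is therefore in $\mathscr U'$. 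Setting $\mathcal Y_n=\{X_m: m\in I_n'\}$ gives finite families with pairwise disjoint unions and $\bigcup_n\mathcal Y_n=\{X_m: m\in\bigcup_n I_n'\}$, whose index set is cofinite, hence in $\mathscr U'$, hence $\bigcup_n\mathcal Y_n\in\mathscr U$, as required. The point to be careful about is that ``thin $I_n$ to indices with $\min X_m$ above a finite bound'' discards only finitely many indices from a set that was already infinite, so with a little bookkeeping — e.g. demand $|I_n|\ge n+1+\#\{m: \min X_m\le B_n\}$ where $B_n$ is the relevant bound — the thinned blocks remain nonempty and their total complement stays finite.
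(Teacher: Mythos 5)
Your construction breaks down at the final step, where you claim that the thinned blocks $I_n'$ ``still cover a cofinite set of indices, whose union is therefore in $\mathscr U'$.'' Thinning block $I_n$ means discarding every index $m\in I_n$ with $\min X_m\le B_n$, where $B_n=\max\bigl(\bigcup_{k<n}\bigcup_{j\in I_k}X_j\bigr)$ grows without bound as $n\to\infty$. Each individual block loses only finitely many indices, but the total discarded set $R=\bigcup_n(I_n\setminus I_n')$ is in general infinite, and no choice of ``long enough'' blocks avoids this. Concretely, take $X_j=\{j,2j\}$ for $j\ge 1$: then $\min X_j=j\to\infty$, but $B_n\ge 2(c_n-1)$ where $c_n=\min I_n$, so the thinning of $I_n$ removes all $m\in I_n$ with $m\le 2c_n-2$ --- at least one index (in fact roughly $c_n$ of them) from \emph{every} block. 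Your bookkeeping condition $|I_n|\ge n+1+\#\{m:\min X_m\le B_n\}$ only keeps each $I_n'$ nonempty; it does nothing to make $R$ finite. Once $R$ is infinite, there is no reason $\bigcup_n I_n'$ should lie in the arbitrary free ultrafilter $\mathscr U'$ --- indeed $R$ itself could be a member of $\mathscr U'$, in which case your candidate set is not.

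This is exactly the point where the ultrafilter property has to be used in a nontrivial way, and the paper does so via Lemma~\ref{Lemma 2.1}: one pushes $\mathscr U$ forward under $X\mapsto\min X$ to an ultrafilter $\mathscr V$ on $\omega$, partitions $\omega$ into consecutive intervals $[c_n,c_{n+1}]$ growing fast relative to the function $f(n)=1+\max\{h(n),n\}$ with $h(n)=\max\{\max X:X\in\xi,\ \min X\le n\}$, and then observes that either the union of the even-indexed intervals or the union of the odd-indexed intervals belongs to $\mathscr V$. Whichever alternative holds, the chosen intervals $[a_n,b_n]$ are separated by entire skipped intervals, so $f(b_n)<a_{n+1}$, and setting $\mathcal Y_n=\{X\in\xi:\min X\in[a_n,b_n]\}$ gives $\bigcup\mathcal Y_n\subset[a_n,a_{n+1}-1]$, hence pairwise disjoint unions, while membership in $\mathscr U$ comes from $\mathscr V$ rather than from any cofiniteness claim. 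Your proposal would need to be repaired by incorporating this two-alternative (even/odd) dichotomy; as written, the key assertion is false.
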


\begin{proof}
Let $\mathscr V=\min \mathscr U=\{\{\min X: X\in \mathcal M\}: \mathcal M\in \mathscr U\}$.
We assume that $\min X_0=0$.
Given $n\in\omega$, we set 
$h(n)=\max\{\max X: X\in\xi,\ \min X\leq n\}$ and $f(n)=1+\max\{h(n),n\}$.
Using Lemma~\ref{Lemma 2.1}, we choose monotone sequences 
$(a_n)_{n\in\omega},(b_n)_{n\in\omega}\subset \omega$ so that 
$a_n<b_n<f(b_n)<a_{n+1}$ for all $n\in\omega$ and $\bigcup_{n\in\omega}[a_n,b_n]\in \mathscr V$.
Let $\mathcal Y_n=\{X\in\xi: \min X\in [a_n,b_n]\}$. Then  $\bigcup_{n\in \omega} \mathcal Y_n\in 
\mathscr U$.
Since $\bigcup \mathcal Y_n\subset [a_n,a_{n+1}-1]$ for each $n\in \omega$, 
it follows that the family 
$\{\bigcup \mathcal Y_n: n\in \omega\}$ is disjoint.
\end{proof}

\begin{lemma}\label{Lemma 3.2}
Let $G=[\omega]^{<\omega}$ be a countable nondiscrete Boolean topological group in which the filter 
of neighborhoods of zero is nonrapid. Then there exists a sequence $\xi=(X_n)_{n\in\omega}\subset 
G\setminus \{\boldsymbol 0\}$ such that 
\begin{enumerate}
\item[{\rm(i)}]
  $\xi$ is discrete, and its only limit point is $\boldsymbol 0$\textup;
\item[{\rm(ii)}]
  $\xi$ can be partitioned into finite subsets $\mathcal Y_n$, $n\in \omega$, so that  
  $(\mathcal Y_i+\mathcal Y_j)\cap\xi=\varnothing$ for different $i,j\in\omega$.
\end{enumerate}
\end{lemma}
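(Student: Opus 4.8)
The plan is to combine the general construction of Corollary~\ref{Corollary 2.1} (equivalently, Statement~\ref{Theorem 2.1}) with Lemma~\ref{Lemma 3.1}, using the Boolean structure to make the ``quotient'' condition~(ii) of Statement~\ref{Theorem 2.1} into the disjointness condition~(ii) we want here. First I would fix a decreasing base $(U_n)_{n\in\omega}$ of symmetric neighborhoods of $\boldsymbol 0$ with $U_{n+1}^3\subset U_n$ and $\bigcap_n U_n=\{\boldsymbol 0\}$, take $\mathscr F$ to be the neighborhood filter of $\boldsymbol 0$, and $X=G$, $f=\mathrm{id}$; then $\mathscr G=\mathscr F$ is nonrapid by hypothesis. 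Statement~\ref{Theorem 2.1} (with no constraint from the $H_n$) yields a discrete sequence $\eta=(Z_k)_{k\in\omega}\subset G\setminus\{\boldsymbol 0\}$ whose only limit point is $\boldsymbol 0$, satisfying condition~(iii) of that statement (that $\eta\cap gU_{n+1}$ is finite for $g\notin U_n$), and such that every neighborhood $F$ of $\boldsymbol 0$ contains $g,h$ with $g\ne h$ and $g+h\in\eta$ (in a Boolean group $g^{-1}h=g+h$).

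The next point is that, after discarding finitely many terms, we may assume $\lim_{k\to\infty}\min Z_k=\infty$: since $\eta$ is discrete with $\boldsymbol 0$ its only limit point, for each $m$ only finitely many $Z_k$ meet the subgroup $[\,m\,]^{<\omega}$ of finite subsets of $m$ (that subgroup being closed — indeed clopen? — no; but it has empty interior, and more to the point, if infinitely many $Z_k$ lay inside a fixed finite-generated, hence finite, subgroup they could not converge to $\boldsymbol 0$ while being distinct). I would make this precise: the sets $\{k:\min Z_k\le m\}$ are finite for each $m$, because $\eta$ converges to $\boldsymbol 0$ and a sequence taking infinitely many values in the \emph{finite} set $\mathcal P(m)$ cannot do so; reindexing, $\min Z_k\to\infty$. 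Now I would pick a free ultrafilter $\mathscr U$ on $G$ with $\eta\in\mathscr U$ and containing every neighborhood of $\boldsymbol 0$ (i.e.\ $\mathscr U\in\operatorname{Ult}^*_{\boldsymbol 0}(G)$), apply Lemma~\ref{Lemma 3.1} to $\mathscr U$ and $\eta$, and obtain finite $\mathcal Y_n\subset\eta$ with $\bigcup_n\mathcal Y_n\in\mathscr U$ and the unions $\bigcup\mathcal Y_n$ pairwise disjoint as subsets of $\omega$. Set $\xi=\bigcup_n\mathcal Y_n$.

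It remains to check that $\xi$ has the required properties. Condition~(i): $\xi\subset\eta$ is discrete with $\boldsymbol 0$ its only possible limit point; and $\boldsymbol 0\in\overline\xi$ because $\xi\in\mathscr U$ and $\mathscr U\to\boldsymbol 0$, so every neighborhood of $\boldsymbol 0$ meets $\xi$ (here I use that $\mathscr U$ was chosen to converge to $\boldsymbol 0$). Condition~(ii): the sets $\mathcal Y_n$ partition $\xi$ by construction; and for $i\ne j$, if $A\in\mathcal Y_i$ and $B\in\mathcal Y_j$ then $A\subset\bigcup\mathcal Y_i$ and $B\subset\bigcup\mathcal Y_j$ are disjoint finite sets, so $A+B=A\cup B$ has $\min(A+B)=\min(A\cup B)\le\max\{\min A,\min B\}$ and, crucially, $A+B$ \emph{properly contains} either $A$ or $B$ and contains elements from both blocks; more efficiently, since the blocks $\bigcup\mathcal Y_n$ are disjoint, the set $A+B=A\cup B$ meets two distinct blocks and hence cannot equal any single $Z_k\in\xi$, which lies in one block — so $(\mathcal Y_i+\mathcal Y_j)\cap\xi=\varnothing$.

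The main obstacle, and the step to get right, is the passage $\min Z_k\to\infty$: everything downstream (Lemma~\ref{Lemma 3.1}'s hypothesis, and then the block-disjointness giving~(ii)) hinges on it, and it must be justified purely from $\eta$ being a nontrivial sequence converging to $\boldsymbol 0$ in the group topology — I expect the clean argument is that for each $m$ the subgroup $\mathcal P(m)=[m]^{<\omega}$ is finite, so $\{k: Z_k\in\mathcal P(m)\}=\{k:\max Z_k<m\}$ is finite (distinct convergent terms cannot repeat within a finite set infinitely often), whence after reindexing $\max Z_k\to\infty$, and then a further thinning gives $\min Z_k\to\infty$ as well; alternatively one builds this directly into the choice of $\eta$ via a suitable auxiliary application of Statement~\ref{Theorem 2.1}. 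I would present whichever of these is shortest.
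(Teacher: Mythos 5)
Your overall architecture matches the paper's (a discrete sequence from the Section~2 machinery, then Lemma~\ref{Lemma 3.1} to extract blocks with pairwise disjoint supports, then the observation that for $A\in\mathcal Y_i$, $B\in\mathcal Y_j$ the sum $A+B=A\cup B$ meets two distinct blocks and so cannot lie in $\xi$), and your verification of~(ii) and of $\boldsymbol 0\in\overline\xi$ is correct. But there is a genuine gap at exactly the step you flag as the main obstacle: $\min Z_k\to\infty$ does \emph{not} follow from $\eta$ being discrete with $\boldsymbol 0$ as its only limit point. Your argument only shows that $\{k:\max Z_k<m\}=\{k:Z_k\subset\{0,\dots,m-1\}\}$ is finite (finitely many distinct sets lie in a finite subgroup), i.e., $\max Z_k\to\infty$. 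The set $\{k:\min Z_k\le m\}$ can perfectly well be infinite --- for instance if $Z_k=\{0,k\}$ --- and no passage to a subsequence repairs this, since every subsequence still has $\min\equiv 0$. Nothing rules such a sequence out: the identification of $G$ with $[\omega]^{<\omega}$ is purely algebraic, the topology need not interact with the sets $\{X:\min X\ge m\}$ at all, so ``a further thinning gives $\min Z_k\to\infty$'' is false as stated, and your earlier sentence conflates $\min Z_k\le m$ with $Z_k\in\mathcal P(m)$.

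The correct repair is the alternative you mention in passing but do not carry out, and it is exactly what the paper does: the subgroups $H_n=[\{m\in\omega:m\ge n\}]^{<\omega}$ have finite index $2^n$ in $G$, so Corollary~\ref{Corollary 2.3} (equivalently, Statement~\ref{Theorem 2.1}\,(iv)) applies with this sequence $(H_n)_{n\in\omega}$ and yields a discrete sequence $\xi'$ with only limit point $\boldsymbol 0$ such that $\xi'\setminus H_n$ is finite for every $n$; since $X\in H_n$ means $\min X\ge n$, this gives $\lim_{k\to\infty}\min X'_k=\infty$ directly, with no thinning. This is the whole reason the finite-index subgroups $H_n$ appear in Statement~\ref{Theorem 2.1} and Corollary~\ref{Corollary 2.3}. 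With that substitution the remainder of your proof (the choice of $\mathscr U\in\operatorname{Ult}^*_{\boldsymbol 0}(G)$ containing $\xi'$, Lemma~\ref{Lemma 3.1}, and the block-disjointness argument) goes through as written.
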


\begin{proof}
Consider the sets $H_n=[\{m\in\omega: m\geq n\}]^{<\omega}$, $n\in\omega$; these are subgroups 
of finite index in $G$. By Corollary~\ref{Corollary 2.3},
there exists a discrete sequence $\xi'={(X_n')}_{n\in\omega}\subset G\setminus \{\boldsymbol 0\}$ 
such that $\boldsymbol 0$ is its only limit point and $\xi'\setminus H_n$ is finite for each 
$n\in\omega$. We have $\lim_{n\to\infty} \min X_n'=\infty$. Let $\mathscr U$ be an ultrafilter on 
$G$ converging to $\boldsymbol 0$ and containing $\xi'$ as an element. Using Lemma~\ref{Lemma 3.1}, 
we choose a sequence $(\mathcal Y_n)_{n\in\omega}$ of finite subsets of $\xi'$ so that 
$\bigcup_{n\in \omega} \mathcal Y_n\in \mathscr U$ and $\bigl(\bigcup \mathcal Y_i\bigr) \cap 
\bigl(\bigcup \mathcal Y_j\bigr)=\varnothing$ for any different $i, j\in \omega$. It remains to set 
$\xi=\bigcup_{n\in \omega} \mathcal Y_n$. 
\end{proof}

\begin{theorem}\label{Theorem 3.1}
Suppose that there are no rapid ultrafilters.
Let $G$ be a countable nondiscrete Boolean topological group. 
Then there exist two disjoint discrete sequences 
$(X_n)_{n\in\omega},(Y_n)_{n\in\omega}\subset G\setminus \{\boldsymbol 0\}$ 
for each of which $\boldsymbol 0$ is a unique limit point. 
\end{theorem}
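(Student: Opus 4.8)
The plan is to apply Lemma~\ref{Lemma 3.2} to obtain one discrete sequence $\xi=(X_n)_{n\in\omega}$ with unique limit point $\boldsymbol 0$, together with the partition $\xi=\bigcup_n\mathcal Y_n$ into finite pieces such that $(\mathcal Y_i+\mathcal Y_j)\cap\xi=\varnothing$ for $i\ne j$; then use this to manufacture a \emph{second} discrete sequence $\xi'$, disjoint from $\xi$, still accumulating only at $\boldsymbol 0$. The natural candidate for the second sequence is $\xi'\subset\bigcup_{i\ne j}\mathcal Y_i+\mathcal Y_j$: elements of the form $X+Y$ with $X\in\mathcal Y_i$, $Y\in\mathcal Y_j$, $i\ne j$. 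The disjointness property $(\mathcal Y_i+\mathcal Y_j)\cap\xi=\varnothing$ is exactly what guarantees $\xi\cap\xi'=\varnothing$ once $\xi'$ is chosen inside that union, so the two sequences will automatically be disjoint.

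The key point is to get the second sequence to accumulate at $\boldsymbol 0$ while remaining discrete there, and this is where Statement~\ref{Corollary 2.2} enters. First I would observe that since $\boldsymbol 0\in\overline\xi\setminus\xi$ and $\{\mathcal Y_n:n\in\omega\}$ is a partition of $\xi$ into finite sets, Statement~\ref{Corollary 2.2} (applied with $Y=\xi$ and $Y_n=\mathcal Y_n$) yields a discrete sequence $\xi'=(Y_n)_{n\in\omega}\subset G\setminus\{\boldsymbol 0\}$ whose only limit point is $\boldsymbol 0$ and which satisfies $\xi'\subset\bigcup_{i\ne j}\mathcal Y_i+\mathcal Y_j$ (recall that in a Boolean group $\mathcal Y_i^{-1}=\mathcal Y_i$, so the quotient sets $Y_i^{-1}Y_j$ of Statement~\ref{Corollary 2.2} are just $\mathcal Y_i+\mathcal Y_j$). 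This uses the hypothesis that there are no rapid ultrafilters, exactly as Statement~\ref{Corollary 2.2} requires.

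It then remains only to check the disjointness $\xi\cap\xi'=\varnothing$, which is immediate: every element of $\xi'$ lies in some $\mathcal Y_i+\mathcal Y_j$ with $i\ne j$, and by property~(ii) of Lemma~\ref{Lemma 3.2} no such element lies in $\xi$. Thus $(X_n)_{n\in\omega}$ and $(Y_n)_{n\in\omega}$ are the two required disjoint discrete sequences, each with unique limit point $\boldsymbol 0$. I do not anticipate a serious obstacle here; the only mildly delicate point is making sure the hypotheses of Statement~\ref{Corollary 2.2} are literally met — in particular that the partition of $\xi$ provided by Lemma~\ref{Lemma 3.2} consists of finite sets and that $\boldsymbol 0\in\overline\xi\setminus\xi$, both of which are explicitly part of the conclusion of Lemma~\ref{Lemma 3.2}(i)--(ii). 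The whole argument is essentially a two-line deduction once Lemmas~\ref{Lemma 3.1} and~\ref{Lemma 3.2} and Statement~\ref{Corollary 2.2} are in hand, so the real work of the section is already done in those preliminary results.
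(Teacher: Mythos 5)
Your proposal is correct and follows exactly the paper's own argument: apply Lemma~\ref{Lemma 3.2} to get $\xi$ with its partition into finite pieces $\mathcal Y_n$, then apply Statement~\ref{Corollary 2.2} to $Y=\xi$, $Y_n=\mathcal Y_n$ to produce $\xi'\subset\bigcup_{i\ne j}(\mathcal Y_i+\mathcal Y_j)$, and conclude disjointness from property~(ii) of Lemma~\ref{Lemma 3.2}. No differences worth noting.
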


\begin{proof}
By Lemma~\ref{Lemma 3.2}, there is a sequence $\xi=(X_n)_{n\in\omega}\subset G\setminus 
\{\boldsymbol 0\}$ such that
\begin{enumerate}
\item[{\rm(i)}]
  $\xi$ is discrete, and $\boldsymbol 0$ is its only limit point; 
\item[{\rm(ii)}]
  $\xi$ can be partitioned into finite subsets $\mathcal Y_n$, $n\in \omega$, so that  
  $(\mathcal Y_i+\mathcal Y_j)\cap\xi=\varnothing$ for different $i,j\in\omega$.
\end{enumerate}
Using Statement~\ref{Corollary 2.2}, 
we find $\xi'=(Y_n)_n\subset G\setminus \{\boldsymbol 0\}$ 
such that 
\begin{enumerate}
\item[{\rm(iii)}]
  $\xi'$ is discrete, and $\boldsymbol 0$ is its only limit point;
\item[{\rm(iv)}] 
  $\xi'\subset \bigcup_{i\neq j}(\mathcal Y_i+\mathcal Y_j)$.
\end{enumerate}
It follows from (ii) and (iv) that $\xi\cap\xi'=\varnothing$.
\end{proof}

\section{Answers and questions}
\label{Section 4}

Theorem~\ref{Corollary 2.4} 
solves a problem of Protasov~\cite{Protasov}. 
Namely, the following assertion is valid.  

\begin{corollary} \label{Corollary 4.1}
It is consistent with ZFC that any countable nondiscrete 
topological group contains a nonclosed 
discrete subset with only one limit point. 
\end{corollary}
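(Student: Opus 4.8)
The plan is to derive this immediately from Theorem~\ref{Corollary 2.4} together with Miller's consistency theorem. First I would pass to a model of ZFC in which there are no rapid ultrafilters; such a model exists by~\cite{Miller}. In that model there are no rapid filters on $\omega$ at all: as observed in the introduction, any filter containing a rapid filter is itself rapid, so a rapid filter on $\omega$ would generate a rapid ultrafilter, contrary to the choice of the model.

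Next I would take an arbitrary countable nondiscrete topological group $(G,\tau)$ with identity element $e$ and identify its underlying countably infinite set with $\omega$. Under this identification the neighborhood filter of $e$ (or, if one prefers, its trace on $G\setminus\{e\}$, which is a free filter since $G$ is Hausdorff and nondiscrete) is a filter on $\omega$, hence nonrapid in the chosen model. Thus $G$ satisfies the hypothesis of Theorem~\ref{Corollary 2.4}.

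Finally, applying Theorem~\ref{Corollary 2.4} yields a discrete sequence $\xi\subset G\setminus\{e\}$ whose unique limit point is $e$. Since $e\in\overline{\xi}\setminus\xi$, the set $\xi$ is a nonclosed discrete subset of $G$ with precisely one limit point. As $G$ was arbitrary, every countable nondiscrete topological group has this property in the model under consideration, so the assertion is consistent with ZFC. I do not expect any genuine obstacle here: the substantive work has already been done in Theorem~\ref{Corollary 2.4} and in~\cite{Miller}, and the only thing to check is the routine observation that the absence of rapid ultrafilters forces every filter on $\omega$ — in particular every neighborhood filter of a countable topological group — to be nonrapid.
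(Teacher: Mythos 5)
Your proposal is correct and is exactly the argument the paper intends: in Miller's model there are no rapid filters (since any rapid filter extends to a rapid ultrafilter), so the neighborhood filter of the identity of any countable nondiscrete group is nonrapid, and Theorem~\ref{Corollary 2.4} applies. This matches the paper's own (implicit) derivation of Corollary~\ref{Corollary 4.1}.
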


This assertion gives also a partial answer to Arhangel'skii  and Collins' question on the 
existence in ZFC of a nondiscrete nodec topological group~\cite[Problem~8.1]{Arhangelskii-Collins}.

According to Theorem~\ref{Theorem 2.3}, the existence of a countable nondiscrete topological group 
containing no two disjoint discrete sequences for each of which the identity is a unique limit 
point implies the existence of either a rapid ultrafilter or a $P$-point ultrafilter. As mentioned 
in the introduction, it is unknown whether the nonexistence of both rapid and $P$-point 
ultrafilters is consistent with ZFC.  This gives rise to the following question.

\begin{problem}\label{Problem 1}
Does there exist in ZFC a countable nondiscrete topological group 
containing no two disjoint discrete sequences which have the same unique limit point?  
\end{problem}

Note that such a group cannot be Boolean by virtue of Theorem~\ref{Theorem 3.1}. 

Recall that a  topological space is said to be \emph{resolvable}  
if it can be partitioned into two dense subsets; otherwise, a space is \emph{irresolvable}.
A topological space  is said to be \emph{$\omega$-resolvable} 
if it can be represented as a countable disjoint union of dense subsets. 
Any homogeneous regular space containing a countable discrete nonclosed set is $\omega$-resolvable
(see \cite[Theorem~3.33]{Zelenyuk11}).
Therefore, Theorem~\ref{Corollary 2.4} 
implies the following assertion. 

\begin{corollary} 
\label{Corollary 4.2}
The neighborhood filter of the identity element of any countable nondiscrete 
non-$\omega$-resolvable topological group is rapid.
\end{corollary}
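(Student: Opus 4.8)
The plan is to derive Corollary~\ref{Corollary 4.2} directly from Theorem~\ref{Corollary 2.4} together with the quoted fact that any homogeneous regular space containing a countable discrete nonclosed set is $\omega$-resolvable (\cite[Theorem~3.33]{Zelenyuk11}). First I would argue by contraposition: suppose $G$ is a countable nondiscrete topological group whose identity element has \emph{nonrapid} neighborhood filter, and show that $G$ must then be $\omega$-resolvable. By Theorem~\ref{Corollary 2.4}, such a $G$ contains a discrete sequence $\xi$ with precisely one limit point; in particular $\xi$ is a countable discrete subset of $G$ that is not closed (its limit point does not belong to it, since it is a sequence with a \emph{unique} limit point, which we may assume is not among its terms).

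Next I would invoke homogeneity and regularity of topological groups: every topological group is a homogeneous space, and every Hausdorff topological group is regular (indeed Tychonoff), so the hypotheses of \cite[Theorem~3.33]{Zelenyuk11} are met. Hence $G$ is $\omega$-resolvable, and in particular not non-$\omega$-resolvable. This is exactly the contrapositive of the claimed statement: if $G$ is countable, nondiscrete, and non-$\omega$-resolvable, then the neighborhood filter of $e$ cannot be nonrapid, i.e.\ it is rapid.

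I do not anticipate any serious obstacle here; the corollary is essentially a one-line deduction. The only minor point worth spelling out is why the discrete sequence produced by Theorem~\ref{Corollary 2.4} can be taken to be a genuinely nonclosed set rather than merely a convergent-looking family: since it has a limit point and that limit point is not one of its terms (the terms lie in $G\setminus\{e\}$ in the constructions of the previous sections, while $e$ is the limit point, or one passes to a subsequence), the set is discrete and nonclosed. After that, the cited resolvability theorem and the homogeneity of groups finish the argument immediately.
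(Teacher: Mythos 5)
Your argument is correct and is exactly the paper's own: contrapose, apply Theorem~\ref{Corollary 2.4} to obtain a countable discrete nonclosed subset, and invoke the cited fact that a homogeneous regular space with such a subset is $\omega$-resolvable. Nothing further is needed.
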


Recall that a topological group $G$ is said to be \emph{maximal} if $G$ with any stronger (not 
necessarily group) topology has isolated points. Clearly, any maximal group is irresolvable. 
Moreover, it is known that any maximal group is locally countable and even contains 
a countable open Boolean subgroup~\cite{Malykhin79} (see also \cite[Theorem~5.7]{Zelenyuk11}). 
Therefore, Corollary~4.2 has the following consequence. 

\begin{corollary} 
\label{Corollary 4.2m}
The neighborhood filter of the identity element of any maximal topological group is rapid.
\end{corollary}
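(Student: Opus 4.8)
The plan is to reduce the statement about an arbitrary maximal group to the countable Boolean case and then invoke Corollary~\ref{Corollary 4.2}. First I would recall the two structural facts quoted just before the corollary: every maximal topological group $G$ is locally countable and, in fact, contains a countable open Boolean subgroup $H$ (this is Malykhin's theorem, \cite{Malykhin79}; see also \cite[Theorem~5.7]{Zelenyuk11}). Since $H$ is open in $G$, its neighborhood filter of the identity is (cofinal with) the trace on $H$ of the neighborhood filter of the identity in $G$; in particular, the identity of $H$ has rapid neighborhood filter if and only if the identity of $G$ does. So it suffices to prove that the countable open subgroup $H$ has rapid neighborhood filter at its identity.

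Next I would observe that $H$, being open in the maximal (hence irresolvable) group $G$, is itself irresolvable: an open subspace of an irresolvable space is irresolvable, since a partition of $H$ into two dense-in-$H$ sets would extend (adding the complement $G\setminus H$ arbitrarily to one piece) to a partition of $G$ into two dense sets. In particular $H$ is not $\omega$-resolvable. Also $H$ is nondiscrete: a maximal group has no isolated points (it is infinite by our blanket assumption, and any stronger topology is required to produce isolated points, so the group topology itself has none), and an open subgroup of a group without isolated points has no isolated points either. Thus $H$ is a countable nondiscrete non-$\omega$-resolvable topological group, and Corollary~\ref{Corollary 4.2} applies directly to give that the neighborhood filter of the identity of $H$ is rapid.

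Finally I would assemble the pieces: by the previous paragraph the identity of $H$ has rapid neighborhood filter, and by the openness of $H$ in $G$ this filter is a base (up to finite modifications, which do not affect rapidity) for the neighborhood filter of the identity of $G$; since any filter containing a rapid filter is rapid, the neighborhood filter of the identity of $G$ is rapid. The only place requiring genuine care is the comparison of the neighborhood filters of $H$ and of $G$ and the verification that "rapid" transfers along this comparison — but this is immediate from the fact that $H$ is open, so that every neighborhood of $e$ in $H$ is a neighborhood of $e$ in $G$ and conversely the trace of each $G$-neighborhood on $H$ is an $H$-neighborhood, and from the elementary observation recalled in the introduction that a filter containing a rapid filter is rapid. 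No new set-theoretic hypothesis is needed, so the conclusion is in ZFC.
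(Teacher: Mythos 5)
Your overall route is exactly the one the paper intends: pass to Malykhin's countable open Boolean subgroup $H$, note that the neighborhood filters of the identity in $H$ and in $G$ coincide up to the obvious identification because $H$ is open, check that $H$ is countable, nondiscrete and non-$\omega$-resolvable, and apply Corollary~\ref{Corollary 4.2}. The paper gives no more detail than this, and your handling of the nondiscreteness of $H$ and of the transfer of rapidity along the open inclusion is fine.

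However, your justification that $H$ is irresolvable does not work as written. A proper open subgroup $H$ is also closed, so if $\{A,B\}$ is a partition of $H$ into two dense-in-$H$ sets and you throw all of $G\setminus H$ into the piece $A$, then the other piece $B$ remains a subset of $H$ and misses the nonempty open set $G\setminus H$; it is not dense in $G$. (Splitting $G\setminus H$ between the two pieces does not help either: you would then need each coset of $H$ other than $H$ itself to be split into two relatively dense pieces, which is essentially what is being proved.) The correct argument uses the coset decomposition: if $H$ were resolvable, then each coset $gH$, being homeomorphic to $H$ under left translation, would split into two dense-in-$gH$ pieces $A_g$ and $B_g$, and $\bigcup_g A_g$ and $\bigcup_g B_g$ would be two disjoint dense subsets of $G$, since every nonempty open subset of $G$ meets some coset in a nonempty relatively open set; this contradicts the irresolvability of the maximal group $G$. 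Alternatively, one can check that an open subgroup of a maximal group is itself maximal (refine the topology on $H$ while keeping the original topology on the clopen complement $G\setminus H$; the resulting topology on $G$ is strictly finer and has no isolated points), hence irresolvable. With either repair the rest of your argument goes through and the conclusion is indeed a ZFC theorem.
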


The existence of a countable nondiscrete $\omega$-irresolvable topological
group implies the existence of a $P$-point in $\beta\omega\setminus \omega$
(see \cite[Theorem~12.13]{Zelenyuk11}).

\begin{problem}\label{Problem 2}
Does the existence of a countable nondiscrete 
maximal (irresolvable, $\omega$-ir\-re\-solv\-able)  topological
group imply the existence of a selective ultrafilter?
\end{problem}

As is known, if $X$ and $Y$ are countable separated sets in an extremally disconnected 
space (``separated'' means that $\overline X\cap Y = X\cap \overline Y = \varnothing$), then 
$\overline X\cap  \overline Y = \varnothing$ (see, e.g., \cite[Proposition~1.9]{Frolik}). 
Combining this 
with Theorem~\ref{Theorem 3.1} and recalling that any extremally disconnected group contains 
an open Boolean subgroup, we arrive at the following conclusion. 

\begin{corollary}\label{Corollary 4.3}
The existence of  a countable nondiscrete extremally disconnected group implies the 
existence of a rapid ultrafilter.
\end{corollary}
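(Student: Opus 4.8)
The plan is to derive Corollary~\ref{Corollary 4.3} by combining three ingredients already available: Malykhin's theorem that every extremally disconnected group contains an open Boolean subgroup, Theorem~\ref{Theorem 3.1}, and the cited fact that in an extremally disconnected space two countable separated sets have disjoint closures. Suppose, for contradiction, that there exists a countable nondiscrete extremally disconnected group $G$ but no rapid ultrafilter. Since extremal disconnectedness passes to open subspaces, the open Boolean subgroup $B\subset G$ given by Malykhin's theorem is itself a countable extremally disconnected group; it is nondiscrete because $G$ is nondiscrete and $B$ is open (so $B$ cannot be discrete, as that would make $\{e\}$ open in $G$). Thus $B$ is a countable nondiscrete Boolean topological group.

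Now I would apply Theorem~\ref{Theorem 3.1} to $B$: under the assumption that there are no rapid ultrafilters, we obtain two disjoint discrete sequences $(X_n)_{n\in\omega}$ and $(Y_n)_{n\in\omega}$ in $B\setminus\{\boldsymbol 0\}$, each having $\boldsymbol 0$ as its unique limit point. Regard these as subsets $A=\{X_n:n\in\omega\}$ and $C=\{Y_n:n\in\omega\}$ of $B$. Each of $A$ and $C$ is a discrete subspace whose closure is obtained by adjoining the single point $\boldsymbol 0$; in particular $\overline A=A\cup\{\boldsymbol 0\}$ and $\overline C=C\cup\{\boldsymbol 0\}$. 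The key point to check is that $A$ and $C$ are separated: since $A\cap C=\varnothing$ (the sequences are disjoint) and $\boldsymbol 0\notin A$, $\boldsymbol 0\notin C$, we get $\overline A\cap C=(A\cup\{\boldsymbol 0\})\cap C=\varnothing$ and symmetrically $A\cap\overline C=\varnothing$.

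Having established that $A$ and $C$ are countable separated subsets of the extremally disconnected space $B$, the cited Frolík-type fact (\cite[Proposition~1.9]{Frolik}) yields $\overline A\cap\overline C=\varnothing$. But $\boldsymbol 0\in\overline A\cap\overline C$, since $\boldsymbol 0$ is a limit point of both sequences. This contradiction completes the argument, so a countable nondiscrete extremally disconnected group forces the existence of a rapid ultrafilter.

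The only place needing a little care is the passage to the Boolean subgroup and the verification that it remains nondiscrete and extremally disconnected; both are routine, since open subspaces inherit extremal disconnectedness and an open subgroup of a nondiscrete group is nondiscrete. Everything else is a direct citation of Theorem~\ref{Theorem 3.1}, Malykhin's theorem, and the separated-sets property of extremally disconnected spaces, so there is no genuine obstacle — the statement is essentially a bookkeeping corollary assembling the machinery developed in the previous sections.
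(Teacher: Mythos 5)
Your proposal is correct and follows exactly the paper's own route: pass to the open Boolean subgroup via Malykhin's theorem, apply Theorem~\ref{Theorem 3.1} under the assumption that no rapid ultrafilter exists, and derive a contradiction from Frol\'{\i}k's fact that countable separated sets in an extremally disconnected space have disjoint closures. The paper states this argument only in compressed form; your write-up merely spells out the same steps (including the routine verifications that the subgroup is countable, nondiscrete, and extremally disconnected, and that the two sequences are separated).
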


Corollary~\ref{Corollary 4.3} solves Arhangel'skii's problem mentioned in the introduction for 
countable groups. 

\begin{problem}\label{Problem 3}
Is it true that the neighborhood filter of the identity element 
of any countable nondiscrete  extremally disconnected group is rapid?
\end{problem}

All examples of nondiscrete extremally disconnected groups known to the authors 
are constructed in models with selective ultrafilters. Note that 
the existence of a countable nondiscrete extremally disconnected group 
containing a nonclosed discrete subset implies that of a $P$-ultrafilter~\cite{Zelenyuk06}.

\begin{problem}\label{Problem 4}
Does the existence of a countable nondiscrete extremally disconnected group imply that of 
\begin{enumerate}
\item[{\rm(a)}] a selective ultrafilter;
\item[{\rm(b)}] a $P$-point ultrafilter;
\item[{\rm(c)}] a $Q$-point ultrafilter?
\end{enumerate}
\end{problem}

Corollary~\ref{Corollary 4.3} can be refined as follows: \emph{If $G$ is a countable nondiscrete 
extremally disconnected topological group, then some ultrafilter $\mathscr U\in 
\operatorname{Ult}_e(G)$ can be finite-to-one mapped to a rapid ultrafilter on $\omega$}. This 
suggests the following more specific formulation of Problem~\ref{Problem 4}.

\begin{problem}\label{Problem 5}
Let $G$ be a countable nondiscrete extremally disconnected topological group. 
Does there exist an ultrafilter $\mathscr U \in \operatorname{Ult}_e(G)$ that can be mapped to 
\begin{enumerate}
\item[{\rm(a)}] a selective ultrafilter;
\item[{\rm(b)}] a $P$-point ultrafilter;
\item[{\rm(c)}] a $Q$-point ultrafilter? 
\end{enumerate}
\end{problem}

\section*{Acknowledgments}

The authors are very grateful to the referee for useful comments, which helped to 
considerably improve the exposition.

%    Bibliographies can be prepared with BibTeX using amsplain,
%    amsalpha, or (for "historical" overviews) natbib style.
\bibliographystyle{amsplain}
%    Insert the bibliography data here.

\providecommand{\bysame}{\leavevmode\hbox to3em{\hrulefill}\thinspace}
\providecommand{\MR}{\relax\ifhmode\unskip\space\fi MR }
% \MRhref is called by the amsart/book/proc definition of \MR.
\providecommand{\MRhref}[2]{%
  \href{http://www.ams.org/mathscinet-getitem?mr=#1}{#2}
}
\providecommand{\href}[2]{#2}

\end{document}